\newtheorem{thm}{Theorem}
\newtheorem{lem}[thm]{Lemma}
\newtheorem{defn}[thm]{Definition}
\newtheorem{prop}[thm]{Proposition}
\newtheorem{cor}[thm]{Corollary}
\newtheorem{rem}[thm]{Remark}
\newtheorem{con}[thm]{Conjecture}
\newcommand{\N}{\mathbb{N}}
\newcommand{\R}{\mathbb{R}}
\newcommand{\D}{\mathcal{M}_+}
\newcommand{\DCa}{\mathcal{D}_{\mathrm{Ca}}}
\newcommand{\Dco}{\mathcal{D}_{\mathrm{co}}}
\newcommand{\DCo}{\Dco}
\newcommand{\F}{\mathcal{F}}
\newcommand{\Fco}{\mathcal{F}^{co}}
\newcommand{\FMS}{\mathcal{F}^{MSz}}
\newcommand{\HR}{\prescript{*}{}{\mathbb{\R}}} 
\newcommand{\leqHR}{\mathbin{\prescript{*}{}{\leq}}} 
\newcommand{\setsep}{\;\mid\;}
\newcommand{\biggpars}[1]{\biggl(#1\biggr)}
\newcommand{\set}[1]{\{#1\}}
\newcommand{\biggset}[1]{\biggl\{#1\biggr\}}
\DeclareMathOperator{\supp}{supp} 
\newcommand{\undersetbrace}[2]{\underset{#2}{\underbrace{#1}}} 
\begin{document}
\begin{frontmatter}
\title{Remarks on the tail order on moment sequences}
\author
{Vincent Bürgin}
\ead{vincent.buergin@tum.de}

\author
{Jeremias Epperlein}
\ead{jeremias.epperlein@uni-passau.de}

\author
{Fabian Wirth}
\ead{fabian.lastname@uni-passau.de}

\address
{University of Passau, %
    Faculty of Computer Science and Mathematics, %
     Passau, Germany}
\date{\today}

\begin{abstract}
    We consider positively supported Borel measures for which all moments exist. On the set of compactly supported measures in this class a partial order is defined via eventual dominance of the moment sequences. 
    Special classes are identified on which the order is total, but it is shown that already for the set of distributions with compactly supported smooth densities the order is not total. In particular we construct a pair of measures with smooth density for which infinitely many moments agree and another one for which the moments alternate infinitely often.
    This disproves some recently published claims to the contrary. Some consequences for games with distributional payoffs are discussed.
\end{abstract}

\begin{keyword}
    moment sequences, tail order, stochastic orders, Müntz-Szász theorem,
    distribution-valued games
    \MSC[2020] 
    60E05, 
    44A60, 
    91A10 
\end{keyword}

\end{frontmatter}

\section{Introduction}

In this paper we analyze a particular order on real-valued random variables and related game-theoretic notions for games in which the payoffs are random variables. The impetus for analyzing this problem results from a sequence of recent papers which discuss a stochastic  order defined via the moment sequences of such random variables. In the course of the investigation several new properties of moment sequences are derived which we believe to be of interest in their own right in the context of the moment problem. In particular, moment sequences corresponding to distinct, compactly supported, $C^\infty$ densities can coincide infinitely often, or alternate. For piecewise analytic, compactly supported densities this is not possible.

In \cite{rassDecisionsUncertainConsequences2016a,rassDefendingAdvancedPersistent2017} an order on probability distributions which are compactly supported was introduced and applied in a game theoretic context. Theoretical support for these papers was provided in the arxiv papers \cite{rass2020gametheoretic_p1_v5,rass2020gametheoretic_p2_v2,rass2017gametheoretic_p3_v1}. The aim of these papers was to lay the foundations for a theory of games with probability distributions as payoffs. This theory was further developed in \cite{rassGameTheorySecurity2018} and has found applications in the literature related to energy distribution networks, \cite{alshawishQuasipurificationMixedGame2019,alshawishRiskMitigationElectric2019}.

The development may be briefly summarized in the following way. Consider probability distributions supported on an interval $[a,b]$, $0<a<b$ which have continuous densities, or which are discrete. For such distributions the moment sequences $s=(s_n)_{n\in\N}$ exist. We may define an order relation, called the tail order, by considering eventual domination of the moment sequences, i.e. by the requirement that after a certain index the moments of one sequence always exceed those of the other. A major tool in the theoretical analysis consists of mapping moment sequences to the space of hyperreals, i.e., the space of real sequences modulo a free ultrafilter. It is then attempted to derive properties of moment sequences from properties of the hyperreals.

Major claims in \cite{rassDecisionsUncertainConsequences2016a,rass2020gametheoretic_p1_v5} are the following:
\begin{enumerate}[1.]
    \item The tail order is total when restricted to compactly supported probability distributions which are discrete or have a continuous density.
    \item The map from the moment sequences to the hyperreals is injective. 
    \item If, for two probability distributions $\mu_1,\mu_2$ from the considered class, $\mu_2$ dominates $\mu_1$ in the tail order, then the density of $\mu_2$ dominates that of $\mu_1$ at the right end of the support. (Paraphrasing \cite[Theorem~2]{rassDecisionsUncertainConsequences2016a} or \cite[Theorem~2.15]{rass2020gametheoretic_p1_v5}.)
    \item It is concluded in the aftermath of \cite[Theorem~3.3]{rass2020gametheoretic_p1_v5} that games with probability distributions as payoffs have Nash equilibria in mixed strategies. The computation of such equilibria is discussed in \cite{rass2020gametheoretic_p2_v2}.
\end{enumerate}

We will show that the claims 1-4 are false in this generality. Claim 1 is refuted by Corollaries~\ref{cor:infinitely-many-agree} and~\ref{cor:not-total-cinfty-unimodal}; the next claim does not hold because of Proposition~\ref{prop:no-embedding}; Claim 3 is contradicted by Theorem~\ref{thm:nodominanceofdensitiesrequired} and, finally, Claim 4 does not hold by Theorem~\ref{thm:noNash} even in a simple case, where the tail order is restricted to the case of finite distributions, where it is total.

Despite this collection of negative results we were also interested in the question, to which extent the approach is viable. We will identify classes of positive measures on which the tail order is total. We will also identify a construction of hyperreal numbers which leads to an embedding of moment sequences. For the theory of games, however, we have no positive result, as already the easiest case leads to the nonexistence of equilibria.

The paper is organized as follows.
In the ensuing Section~\ref{sec:preliminaries} we recall some well-known results on moment sequences, moment generating functions and stochastic orders. The tail order is formally introduced. We also recall the Müntz-Szász theorem, which will prove instrumental in our constructions.
Section~\ref{sec:hyperreals} reviews some basic facts from nonstandard analysis concerning constructions of the hyperreals. We show that for the standard ultrafilters built from the Fréchet filter, we do not obtain an embedding of moment sequences into the hyperreals. For a more specific class of ultrafilters, extending what we call a Müntz-Szász filter, this property can be achieved. While this construction indeed allows for the definition of a total order on the moment sequences, we show that the order still depends on the ultrafilter and is therefore of little practical relevance.

In Section~\ref{sec:two_sufficient_conditions} we present two sufficient conditions of eventual dominance of moment sequences. In Proposition~\ref{prop:cdf} this condition is stated in terms of eventual dominance of the cumulative distribution function; Proposition~\ref{prop:density-signed} treats the signed case using eventual dominance of densities.  The first main result describes classes of measures of compact support for which the tail order is total. It will also be shown that such conditions are not necessary, that is, eventual dominance of cumulative distribution functions is not necessary for dominance in the tail order. 

The aim of Section~\ref{sec:continuousfunction-vanishingmoments} is to provide the main technical tool in our construction of counterexamples. It is shown that there exist signed, compactly supported measures with $C^\infty$ densities for which infinitely many moments vanish. Using this it can be shown that there are distinct compactly supported, positive $C^\infty$-densities for which infinitely many moments agree. By the Müntz-Szász theorem, the sequence where the moments agree cannot be a Müntz-Szász sequence, a term that will be made precise later. We conjecture that for any sequence which is not a Müntz-Szász sequence the result of this section can be replicated. But we were not able to show this.
Recently, the question of how many moments are necessary to reconstruct a measure has been investigated implicitly in \cite{melanova_recovery_2021} for finitely supported uniform measures.

Section~\ref{sec:pairs-of-measures} provides two more specific examples. These show that compactly supported measures with $C^\infty$ density or even $C^\infty$ unimodal densities need not be comparable in the tail order.

In Section~\ref{sec:games} we briefly comment on game theoretic aspects of games that have probability distributions as payoffs. One of the hopes associated with the original development of the tail order was that, based on this order, a theory of games can be derived that resembles the theory of finite games with real payoffs. In Section~\ref{sec:two_sufficient_conditions} some classes of  distribution-valued payoffs are identified, such that we have a totally ordered set. Essentially, in all these cases the order can be represented as a lexicographic order. While it is most likely folk knowledge in game theory that for games with lexicographically ordered payoffs, Nash equilibria need not exist, we were not able to locate an example to this effect in the literature. We show that even for the simplest case of games with payoffs that are finitely supported distributions, Nash equilibria need not exist.

\section{Preliminaries}
\label{sec:preliminaries}

Let $\N$ be the set of natural numbers including $0$. The real numbers are denoted by $\R$ and $\R_{\geq0}:=[0,\infty)$.
We denote the set of finite, positive Borel (or, equivalently, finite, positive Radon) measures supported on $\R_{\geq0}$ by $M_+(\R_{\geq0})$.
The moments of $\mu\in M_+(\R_{\geq0})$ are given by 
\begin{equation}
    \label{eq:moment}
    s_k (\mu) := \int_0^\infty t^k d\mu(t), \quad k\in \N,
\end{equation}
provided the integral exists. We will also use this notation for signed measures for which the respective integral exists.
The classic moment problem is the problem of identifying the sequences $s=(s_k)_{k\in\N}$ that can occur as moments of a particular measure. If this is the case then $s$ is called a \textit{moment sequence}. A moment sequence $s$ is called \emph{determinate} if there is a unique measure $\mu$ with $s=s(\mu)$. We refer to \cite{akhiezer1965classical,schmudgen2017moment} for background in this area.



 We are interested in positive measures for which all moments exist and define
\begin{equation}
    \label{set:all-moments-exist}
    \D:= \left\{ \mu \in M_+(\R_{\geq0})\;\middle\vert\; \text{ all moments $s_k(\mu), k\in\N,$ exist} \right\}.
\end{equation}
We note that all finite, compactly supported, positive measures are trivially elements of $\D$. An important subset of $\D$ can be stated using the \emph{Carleman condition}, see \cite[Theorem~4.3]{schmudgen2017moment}. We define the set as
\begin{equation}
    \label{set:Carleman}
    \DCa:= \left\{ \mu \in \D \;\middle\vert\;  \sum_{k=1}^\infty s_k(\mu)^{-\frac{1}{2k}}=\infty \right\}.
\end{equation}
The moment sequences of $\mu\in \DCa$ are determinate and for a compactly supported measure $\mu\in\D$ it is easy to see that $\mu\in\DCa$. Our main interest will be the set
\begin{equation}
    \label{set:compactlysupported}
    \DCo:= \left\{ \mu \in \D \;\middle\vert\;  \exists 0<a<b <\infty
    \;:\; \supp \mu \subset [a,b]\right\}.
\end{equation}
Associated to $\mu\in\D$ we define the formal series
\begin{equation}
    G_{\mu}(z) = \sum_{k=1}^\infty \frac{s_k(\mu)}{k!} z^k = \int_0^\infty e^{zx} d\mu(x).
\end{equation}
If this series converges in a neighborhood of $0$, then we speak of the \emph{moment generating function} of $\mu$. Closely related is the \emph{probability generating function}
\begin{equation}
    P_{\mu}(z) = \int_0^\infty z^{x} d\mu(x), \quad z\in[0,1],
\end{equation}
with the property that $P_{\mu}(e^{z})=G_{\mu}(z)$.

Stochastic orders are frequently used in applications, see \cite{shaked2007stochastic}. The classic order for probability measures supported on $[0,\infty)$ defines $\mu_1$ to be smaller than $\mu_2$, if for every increasing function $f:[0,\infty)\to\R$ we have
\begin{equation}
\label{eq:standardstochasticorder}
    \int_0^\infty f(x) \,d\mu_1(x) \leq \int_0^\infty f(x) \,d\mu_2(x).
\end{equation}
Other orders that have been suggested in the literature are the \emph{increasing concave order}, where the requirements of \eqref{eq:standardstochasticorder} need only hold for all increasing, bounded, concave functions, and the \emph{probability generating function order}, where we require that $P_{\mu_1}(t) \leq P_{\mu_2}(t)$ for all $t\in(0,1)$, see \cite{shaked2007stochastic,johnson2018stochastic}. A further order, considered in \cite{hutchcroft2020transience},
is the \emph{germ order}, which requires that there exists an $\varepsilon>0$ such that $P_{\mu_1}(t) \leq P_{\mu_2}(t)$ for all $t\in (1-\varepsilon,1)$.
As noted in \cite{hutchcroft2020transience} this order is total on $\DCa$ and corresponds to the lexicographic order on the alternating moment sequences $((-1)^{k+1}s_k)_{k\in\N}$. Finally, the moment order defined in Section 5.C.2 of \cite{shaked2007stochastic} is 
defined by the condition that one moment sequence dominates another one in every index and the \emph{moment generating function order} defined in Section 5.C.3 of \cite{shaked2007stochastic}
requires that one moment generating function dominates the other for all $z>0$. As we can see, several orders on probability measures have proven useful in the literature and it is not unheard of that for a particular class of probability measures an order is total.

The order we would like to discuss here originated with \cite{rassDecisionsUncertainConsequences2016a,rass2020gametheoretic_p1_v5} and we call it the \textit{tail order}. 

\begin{defn}
  On $\R^\N$ we define the \emph{tail relation} by
  \begin{equation}
     (x_n)_{n \in \N} \leq (y_n)_{n \in \N} \quad :\Leftrightarrow \quad 
     \exists n_0 \in \N:
  \forall n \geq n_0: x_n \leq
  y_n.
  \end{equation}
  The \emph{tail relation} on $\D$ is defined by
  \begin{equation}
     \mu_1 \preceq \mu_2 \quad :\Leftrightarrow \quad 
     s(\mu_1) \leq s(\mu_2).
  \end{equation}
\end{defn}

Note that the tail relation on $\R^\N$ is reflexive and transitive but not antisymmetric, thus a preorder or quasiorder. This translates to the tail relation on $\D$, as in $\D$ there are distinct measures with identical moment sequences, see \cite[Example~4.22]{schmudgen2017moment}.
We now aim to show that on $\Dco$ the tail relation is also antisymmetric, so that it defines a partial order.

To this end we recall the following seminal result from approximation theory, the Müntz-Szász theorem. The result 
has many versions, and we state one that best fits our
situation. See for example \cite[Chapter 4, Section 2, E.7 and E.9]{borwein1995}.
\begin{thm}[Müntz-Szász theorem in $C{[a,b]}$ and $L_p{[a,b]}$]\label{thm:muntz-szasz}
  Let $(n_k)_{k \in \N}$ be a strictly increasing sequence of nonnegative integers, let $0<a<b$ and $1\leq
  p\leq \infty$. Then
  the following are equivalent.
  \begin{enumerate}[(i)]
    \item $\text{span}\{x^{n_0},\dots,x^{n_k},\dots\}$ is dense in
      $C[a,b]$.
    \item $\text{span}\{x^{n_0},\dots,x^{n_k},\dots\}$ is dense in
      $L_p[a,b]$.
      \item $\sum_{k=0,n_k \neq 0}^\infty \frac{1}{n_k} = \infty$.
    \end{enumerate}
  \end{thm}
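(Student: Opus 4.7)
The plan is to prove the equivalence $(i) \Leftrightarrow (iii)$ and then deduce $(ii) \Leftrightarrow (iii)$ by standard density arguments: for $1 \le p < \infty$ the space $C[a,b]$ is dense in $L_p[a,b]$ and polynomials lie in both, so the Müntz span is dense in $C[a,b]$ iff it is dense in $L_p[a,b]$; the limiting case $p = \infty$ reduces to $C[a,b]$ under the essential-supremum norm. The hypothesis $0 < a$ is essential throughout: it keeps $x \mapsto x^z$ well-behaved on $[a,b]$ and allows us to extend the integrals $\int_a^b x^z\, d\nu$ to bounded holomorphic functions on a right half-plane, which is the arena for the complex-analytic argument.

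For $(iii) \Rightarrow (i)$ I would argue by contraposition. If the span is not dense in $C[a,b]$, Hahn-Banach together with the Riesz representation theorem produce a nonzero signed Radon measure $\nu$ on $[a,b]$ with $\int_a^b x^{n_k}\, d\nu(x) = 0$ for every $k$. Consider
\[
H(z) := \int_a^b (x/b)^z\, d\nu(x), \qquad \mathrm{Re}(z) \geq 0.
\]
Since $0 < x/b \le 1$, the integrand is bounded by $1$ on the right half-plane, so by dominated convergence $H$ is bounded and holomorphic there and vanishes at each $n_k$. The Cayley map $w = (z-1)/(z+1)$ conjugates $H$ into a bounded holomorphic function $\hat H$ on the open unit disk with interior zeros $w_k = (n_k-1)/(n_k+1)$ for $n_k \ge 1$, and a direct computation gives $1 - |w_k| = 2/(n_k+1)$. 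The Blaschke condition $\sum (1 - |w_k|) < \infty$ for the zero set of a nonzero bounded holomorphic function is therefore equivalent to $\sum_{n_k \ne 0} 1/n_k < \infty$; under $(iii)$ the condition fails, forcing $\hat H \equiv 0$ and hence $H \equiv 0$. Evaluating at nonnegative integers shows that all moments of $\nu$ vanish, and the Weierstrass theorem then yields $\nu = 0$, the desired contradiction.

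For the converse $(i) \Rightarrow (iii)$, again contrapositively, one has to construct an explicit annihilating measure when $\sum 1/n_k < \infty$. The Blaschke product with zeros at the $w_k$ converges to a nonzero bounded holomorphic function on the disk; pulled back through the Cayley map it gives a nonzero bounded holomorphic $G$ on the right half-plane with $G(n_k) = 0$ for all $k$. The main obstacle, and the technical heart of the argument, is to realise $G$ as a Mellin-type integral $\int_a^b (x/b)^z\, d\nu(x)$ for some signed Borel measure $\nu$ on $[a,b]$. This is a Paley-Wiener type inverse problem, solved by a contour integral of $G(z) y^{-z}$ along a vertical line in the half-plane, with the growth estimates from the Blaschke product controlling where the resulting $\nu$ is supported. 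Granted this inverse-Mellin step, $\nu$ is nonzero and annihilates every $x^{n_k}$, so the Müntz span is not dense in $C[a,b]$, completing the proof.
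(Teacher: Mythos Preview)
The paper does not prove this theorem; it is quoted as a classical result with a reference to Borwein--Erd\'elyi, so there is no in-paper argument to compare against. Your treatment of $(iii)\Rightarrow(i)$ via Hahn--Banach, Riesz representation, and the Blaschke condition applied to $H(z)=\int_a^b (x/b)^z\,d\nu(x)$ is the standard complex-analytic route and is correct.

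The remaining parts, however, have genuine gaps. For $(i)\Rightarrow(iii)$, the pulled-back Blaschke product $G(z)=B\bigl((z-1)/(z+1)\bigr)$ is bounded holomorphic on the open right half-plane but has a singularity at $z=-1$ and is \emph{not} entire; by contrast, any function of the form $\int_a^b (x/b)^z\,d\nu(x)=\int_0^{\log(b/a)} e^{-tz}\,d\tilde\nu(t)$ with $\tilde\nu$ a finite signed measure on a compact interval is automatically entire of finite exponential type. Hence $G$ cannot be realised in this form as it stands; one must first multiply by an auxiliary factor to restore entirety and the right Paley--Wiener decay, and then check that the inverse transform really lands in $[a,b]$. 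This is the actual work in this direction and your sketch does not address it. For the $L_p$ equivalence, your claim that density in $C[a,b]$ is \emph{equivalent} to density in $L_p[a,b]$ is only half true: $C$-density implies $L_p$-density for $p<\infty$, but the converse does not follow merely from $C\subset L_p$ being dense. To close the circle you need $\text{not-}(iii)\Rightarrow\text{not-}(ii)$, i.e.\ a nonzero annihilator lying in $(L_p)^*=L_q$, which requires a function and not just a signed measure; your construction only yields the latter. Finally, the remark that the case $p=\infty$ ``reduces to $C[a,b]$'' is incorrect: $C[a,b]$ is a proper closed subspace of $L_\infty[a,b]$, so a span of continuous functions is never dense in $L_\infty[a,b]$, and the equivalence $(ii)\Leftrightarrow(iii)$ as literally stated fails at $p=\infty$.
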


We will call strictly increasing sequences of nonnegative integers satisfying item (iii) of the previous theorem \emph{Müntz-Szász sequences}. An important and well-known consequence for the space $\Dco$ is the following.

\begin{cor}
\label{cor:msz-uniqueness}
Let $(n_k)_{k\in\N}$ be a Müntz-Szász sequence. For every nonnegative sequence $(s_{k})_{k\in\N}$ 
there is at most one  $\mu\in \Dco$
such that $s_{n_k}(\mu)=s_k,  k\in\N$.
\end{cor}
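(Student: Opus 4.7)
The plan is to take two candidate measures $\mu_1,\mu_2\in\Dco$ with $s_{n_k}(\mu_1)=s_{n_k}(\mu_2)=s_k$ for all $k\in\N$, form the signed difference $\nu:=\mu_1-\mu_2$, and apply the Müntz-Szász theorem directly to deduce $\nu=0$.

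First I would choose a common compact interval $[a,b]\subset(0,\infty)$ containing both $\supp\mu_1$ and $\supp\mu_2$; this is possible since both measures belong to $\Dco$. Then $\nu$ is a finite signed Borel measure on $[a,b]$ satisfying
\begin{equation*}
    \int_a^b x^{n_k}\,d\nu(x) = s_{n_k}(\mu_1)-s_{n_k}(\mu_2) = 0 \quad \text{for every } k\in\N.
\end{equation*}
By linearity this extends to $\int_a^b p\,d\nu = 0$ for every $p\in\mathrm{span}\{x^{n_0},x^{n_1},\dots\}$.

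Next I would invoke Theorem~\ref{thm:muntz-szasz}: since $(n_k)_{k\in\N}$ is a Müntz-Szász sequence, condition (iii) holds, hence by the equivalence with (i) the span $\mathrm{span}\{x^{n_k}:k\in\N\}$ is dense in $C[a,b]$. The functional $f\mapsto \int_a^b f\,d\nu$ is bounded on $C[a,b]$ (with norm at most $|\nu|([a,b])<\infty$) and vanishes on a dense subspace, so it vanishes on all of $C[a,b]$.

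Finally, by the Riesz representation theorem, a finite signed Borel measure on the compact set $[a,b]$ is uniquely determined by the continuous linear functional it induces on $C[a,b]$; hence $\nu=0$ and $\mu_1=\mu_2$. I do not anticipate a substantive obstacle: the argument is a clean application of the density statement already quoted, with the only minor care needed being the choice of a common interval $[a,b]$ with $a>0$, which is exactly what the definition of $\Dco$ provides and what is required in order to apply the Müntz-Szász theorem.
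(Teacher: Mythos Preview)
Your argument is correct and essentially identical to the paper's: both pick a common interval $[a,b]\subset(0,\infty)$, use the Müntz-Szász theorem to get density of $\mathrm{span}\{x^{n_k}\}$ in $C[a,b]$, and conclude that the bounded linear functional induced by $\mu_1-\mu_2$ (equivalently, the difference $l_1-l_2$ of the functionals) vanishes on all of $C[a,b]$, whence $\mu_1=\mu_2$ by Riesz. The only cosmetic difference is that the paper phrases it as $l_1=l_2$ on a dense set rather than forming the signed difference measure first.
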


\begin{proof}
Let $\mu_1,\mu_2\in\Dco$ with $n_k$th moment equal to $s_k$, $k\in\N$.
Then the bounded linear functionals $l_1,l_2:C[a,b]\to\R$ induced by $\mu_1,\mu_2$ coincide on the space $X:=\text{span}\{x^{n_k}\;\vert\; k\in\N\}$. By Theorem~\ref{thm:muntz-szasz}, $X$ is dense in $C[a,b]$ and it follows that $l_1=l_2$. This implies $\mu_1=\mu_2$ and the proof is complete.
\end{proof}

With this we obtain:

\begin{cor}
\label{cor:antisymmetric}
    The tail relation $\preceq$ is antisymmetric on $\Dco$.
\end{cor}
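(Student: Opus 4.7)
The plan is to reduce antisymmetry to the Müntz-Szász uniqueness just established. Suppose $\mu_1,\mu_2\in\Dco$ satisfy $\mu_1\preceq\mu_2$ and $\mu_2\preceq\mu_1$. By definition of the tail relation on $\R^\N$, there exist $n_1,n_2\in\N$ with $s_n(\mu_1)\leq s_n(\mu_2)$ for all $n\geq n_1$ and $s_n(\mu_2)\leq s_n(\mu_1)$ for all $n\geq n_2$. Setting $n_0:=\max\{n_1,n_2,1\}$ yields $s_n(\mu_1)=s_n(\mu_2)$ for every $n\geq n_0$.

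Now I would bring both measures onto a common compact support interval: since $\mu_i$ is compactly supported in some $[a_i,b_i]\subset(0,\infty)$, setting $a:=\min\{a_1,a_2\}>0$ and $b:=\max\{b_1,b_2\}<\infty$ gives $\supp\mu_1,\supp\mu_2\subset[a,b]$, so both measures induce bounded linear functionals on $C[a,b]$.

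The key observation is that the index set $(n)_{n\geq n_0}$ is itself a Müntz-Szász sequence: it is a strictly increasing sequence of nonnegative integers, and $\sum_{n\geq n_0}\frac{1}{n}=\infty$ by divergence of the harmonic series. Hence condition (iii) of Theorem~\ref{thm:muntz-szasz} is fulfilled, and Corollary~\ref{cor:msz-uniqueness} applies to the sequence of common moments $(s_n(\mu_1))_{n\geq n_0}=(s_n(\mu_2))_{n\geq n_0}$. It yields at most one measure in $\Dco$ realizing these values on the prescribed indices, so $\mu_1=\mu_2$.

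I do not anticipate a real obstacle here — the entire technical content has been packaged into Corollary~\ref{cor:msz-uniqueness}, and the only thing one must notice is that the cofinite tail $\{n_0,n_0+1,\ldots\}$ itself satisfies the Müntz-Szász divergence condition. The two minor bookkeeping points are taking the maximum of the two thresholds $n_1,n_2$ to get equality of moments from tail-wise two-sided domination, and passing to a common support interval so that Corollary~\ref{cor:msz-uniqueness} is applicable with a single choice of $[a,b]$.
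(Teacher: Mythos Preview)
Your proof is correct and follows essentially the same approach as the paper: observe that two-sided tail domination forces $s_n(\mu_1)=s_n(\mu_2)$ for all $n\geq n_0$, note that this cofinite tail is a Müntz-Szász sequence, and invoke Corollary~\ref{cor:msz-uniqueness}. Your explicit handling of the two thresholds and the common support interval is slightly more detailed than the paper's version, but the argument is the same.
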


\begin{proof}
Let $\mu_1,\mu_2\in \Dco$ with $\mu_1\preceq \mu_2\preceq \mu_1$. Then by definition there exists an index $n_0$, such that $s_n(\mu_1)=s_n(\mu_2)$ for all $n\geq n_0$. As $\{n\in\N\mid n\geq n_0\}$ defines a Müntz-Szász sequence, it follows from Corollary~\ref{cor:msz-uniqueness} that $\mu_1=\mu_2$ and the proof is complete.
\end{proof}



 


\section{Moments and hyperreal numbers}
\label{sec:hyperreals}

In this section we review basic concepts of ordering moment sequences by embedding into the hyperreal numbers. We show that without sufficient care two moment sequences can be identified in the hyperreals but for a specific type of ultrafilters a true embedding can be achieved. The idea to use hyperreals as a tool to generate orders on moment sequences originates with \cite{rass2020gametheoretic_p1_v5}. Our basic reference for the tools from nonstandard analysis that we use here is \cite{goldblatt2012lectures}.

As usual, $\R^{\N}$ denotes the space of real valued sequences indexed by the nonnegative integers. The Fréchet filter $\Fco$ on $\N$ consists of all cofinite sets. In other words, the Fréchet filter contains precisely the
complements of all sets of finite measure with respect to the counting measure on $\N$. The standard procedure is then to extend $\Fco$ to an ultrafilter $\mathcal{F}^*$ (i.e. a filter  with the property that for all $A\subset \N$ either $A$ or its complement lie in $\mathcal{F}^*$). This filter is automatically nonprincipal and any nonprincipal ultrafilter must contain $\Fco$.
The hypperreal numbers  are then defined to be $\HR:=\R^\N/\mathcal{F}^*$, where the equivalence class $[r]$ of a sequence $r\in \R^N$ is defined through the relation
\begin{equation}
\label{eq:hyperreal-equivalence}
    t\in [r] \quad \Leftrightarrow \left\{ n\in\N\mid t_n = r_n \right\} \in \mathcal{F}^*.
\end{equation}
Thinking of our moment sequences, this defines a map $\Phi_{\mathcal{F^*}}:\Dco\to\HR$ by
\begin{equation*}
    \mu \mapsto s(\mu) \mapsto [s(\mu)].
\end{equation*}
The problem is of course that in the last map the formation of equivalence classes depends on the ultrafilter $\mathcal{F}^*$, which is far from unique. 

It is thus of interest to note the following observation.

\begin{prop}
  \label{prop:no-embedding}
For a general nonprincipal ultrafilter $\mathcal{F}^*$ the map $\Phi_{\mathcal{F^*}}$ is not injective, even when restricted to $\left\{\mu\in\Dco \mid \text{$\mu$ has a $C^\infty$ density}\right\}$.  
\end{prop}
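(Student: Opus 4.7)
The plan is to reduce the non-injectivity of $\Phi_{\mathcal{F}^*}$ to the existence of two distinct $C^\infty$ densities whose moment sequences agree on an infinite set of indices, and then exhibit a nonprincipal ultrafilter that identifies them. Concretely, I would invoke the existence result announced in the introduction and established in Section~\ref{sec:continuousfunction-vanishingmoments} (Corollary~\ref{cor:infinitely-many-agree}): there exist $\mu_1,\mu_2\in\Dco$ with distinct $C^\infty$ densities such that the set
\begin{equation*}
    A := \bigset{ n\in\N \setsep s_n(\mu_1) = s_n(\mu_2) }
\end{equation*}
is infinite. This is precisely the input one needs, and its construction is the real technical obstacle, deferred to the later section.

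Given such an infinite $A$, I would next produce a nonprincipal ultrafilter $\mathcal{F}^*$ containing $A$. The family $\Fco \cup \{A\}$ has the finite intersection property: the intersection of finitely many cofinite sets with $A$ is $A$ minus a finite set, which is still nonempty because $A$ is infinite. By the standard Zorn's lemma argument, this family extends to an ultrafilter $\mathcal{F}^*$. Since $\mathcal{F}^* \supseteq \Fco$, every cofinite set belongs to $\mathcal{F}^*$, which forces $\mathcal{F}^*$ to be nonprincipal (no singleton lies in it).

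Finally, by the defining equivalence \eqref{eq:hyperreal-equivalence}, the sequences $s(\mu_1)$ and $s(\mu_2)$ are equivalent modulo $\mathcal{F}^*$ whenever $\{n\mid s_n(\mu_1) = s_n(\mu_2)\} \in \mathcal{F}^*$. Since this set contains $A \in \mathcal{F}^*$, it belongs to $\mathcal{F}^*$ as well, hence $[s(\mu_1)] = [s(\mu_2)]$. Therefore $\Phi_{\mathcal{F}^*}(\mu_1) = \Phi_{\mathcal{F}^*}(\mu_2)$ while $\mu_1\neq\mu_2$, proving non-injectivity.

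The only genuinely delicate step is the first one, namely the existence of two distinct compactly supported $C^\infty$ measures whose moment sequences coincide infinitely often; everything else is formal filter theory. This is why the bulk of the paper's later sections is devoted to that construction, while the present proposition is essentially a short corollary of it.
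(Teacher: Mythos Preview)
Your proposal is correct and follows essentially the same approach as the paper: invoke Corollary~\ref{cor:infinitely-many-agree} to obtain two distinct $C^\infty$ measures whose moments agree on an infinite set, observe that this set together with the Fr\'echet filter has the finite intersection property, and extend to a nonprincipal ultrafilter in which the two moment sequences become identified. Your write-up is slightly more explicit about why the finite intersection property holds and why the resulting ultrafilter is nonprincipal, but the argument is otherwise identical to the paper's.
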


\begin{proof}
We will show in \Cref{cor:infinitely-many-agree} that there are $\mu_1,\mu_2\in\Dco$ with $C^\infty$ densities such that
\begin{equation}
    L := \left\{ n\in\N\mid s_n(\mu_1) = s_n(\mu_2) \right\}
\end{equation}
is an infinite set. The set $\Fco\cup \{L\}$ has the finite intersection property, i.e. finite intersections of its elements are nonempty. 
So the filter $\tilde{\F}$ generated by this set is proper. Then $\tilde{\F}$ may be extended to an ultrafilter $\F^*$. For the equivalence \eqref{eq:hyperreal-equivalence} we have $[s(\mu_1)]=[s(\mu_2)]$ because $L\in \F^*$.  
\end{proof}

We note however, that for more specific classes of ultrafilters an embedding may be obtained. At the heart of this construction lies the Müntz-Szász theorem (Theorem~\ref{thm:muntz-szasz}) and so for want of a better word we first introduce the \emph{Müntz-Szász filter} on $\N$. Consider the $\sigma$-additive measure $\vartheta$ on $\N$ defined by 
\begin{equation}
    \vartheta(\{0\}) = 0, ~~~ \text{and} ~ \vartheta(\{n\}) = \frac{1}{n}, ~ n \geq 1.
\end{equation}
Then the Müntz-Szász filter $\FMS$ on $\N$ is defined by
\begin{equation}
    L\in \FMS \quad :\Leftrightarrow \quad
    \vartheta(\N\setminus L) < \infty.
\end{equation}
It is clear that $\Fco\subset\FMS$ and so any ultrafilter containing $\FMS$ is nonprincipal and may be used to define a particular version of $\HR$. For these ultrafilters the map $\Phi_{\mathcal{F^*}}$ is well behaved.

\begin{thm}
\label{thm:injective-MS-Filter}
Let $\F^*$ be a nonprincipal ultrafilter on $\N $ containing $\FMS$. Then $\Phi_{\mathcal{F^*}}:\Dco\to\HR$ is injective.
\end{thm}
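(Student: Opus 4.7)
The plan is to reduce injectivity to Corollary~\ref{cor:msz-uniqueness} by showing that the ``agreement set'' of the two moment sequences must be a Müntz-Szász sequence. Concretely, suppose $\mu_1,\mu_2\in\Dco$ satisfy $\Phi_{\F^*}(\mu_1)=\Phi_{\F^*}(\mu_2)$. Unpacking the definition \eqref{eq:hyperreal-equivalence}, this means
\begin{equation*}
  L := \bigl\{ n\in\N \setsep s_n(\mu_1) = s_n(\mu_2) \bigr\} \in \F^*.
\end{equation*}
Our goal is to deduce $\mu_1=\mu_2$, and the strategy is to prove that $L$, enumerated increasingly, is a Müntz-Szász sequence and then invoke Corollary~\ref{cor:msz-uniqueness}.

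The crux is the following claim: every $L\in\F^*$ satisfies $\sum_{n\in L,\,n\neq 0} 1/n = \infty$. To see this, assume for contradiction that $\vartheta(L) = \sum_{n\in L,\,n\neq 0}1/n < \infty$ (recall $\vartheta(\{0\}) = 0$). Then by the definition of $\FMS$, the complement $\N\setminus L$ lies in $\FMS$, since $\vartheta(\N\setminus(\N\setminus L)) = \vartheta(L) < \infty$. By hypothesis $\FMS \subset \F^*$, so $\N\setminus L \in \F^*$. But $\F^*$ is a filter, so it cannot contain both $L$ and $\N\setminus L$, a contradiction. Hence $\sum_{n\in L,\,n\neq 0} 1/n = \infty$, which is exactly condition (iii) of Theorem~\ref{thm:muntz-szasz} applied to the natural enumeration of $L$. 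In particular $L$ is automatically infinite, which is also guaranteed separately because $\F^*$ is nonprincipal.

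Now apply Corollary~\ref{cor:msz-uniqueness} to the Müntz-Szász sequence $(n_k)_{k\in\N}$ obtained as the increasing enumeration of $L$, with $s_k := s_{n_k}(\mu_1) = s_{n_k}(\mu_2)$. The corollary yields $\mu_1=\mu_2$, completing the injectivity argument.

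The main, and essentially only, obstacle is the ultrafilter bookkeeping in the key claim: one needs to recognise that $\FMS$ was defined precisely so that the $\vartheta$-finite sets are its dual ideal, making membership of $L$ in any ultrafilter extending $\FMS$ equivalent to $L$ being ``$\vartheta$-large'', i.e.\ a Müntz-Szász sequence. Once this equivalence is observed, the theorem follows immediately from the Müntz-Szász uniqueness statement already recorded in the preliminaries.
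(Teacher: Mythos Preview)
Your proof is correct and follows essentially the same route as the paper: both show that $L\in\F^*$ forces $\N\setminus L\notin\FMS$ (equivalently $\vartheta(L)=\infty$) and then invoke Corollary~\ref{cor:msz-uniqueness}. The only difference is cosmetic---you phrase the filter step as a contradiction, while the paper writes it directly.
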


\begin{proof}
Let $\F^*$ be a nonprincipal ultrafilter on $\N $ containing $\FMS$ and let $\mu_1,\mu_2\in\Dco$ such that $[s(\mu_1)]=[s(\mu_2)]$, i.e.
\begin{equation*}
    L:=\left\{ n\in \N\mid s_n(\mu_1)=s_n(\mu_2) \right\}\in\F^*.
\end{equation*}
Since $\F^*$ is a proper filter, we have $(\N \setminus L) \not \in \F^* \supset \FMS$ and thus $\vartheta(L) =\infty$. By Corollary~\ref{cor:msz-uniqueness} it follows that $\mu_1=\mu_2$.
\end{proof}

As $\HR$ is an ordered field with order $\leqHR$, the previous result allows to pull back this order to obtain a total order on $\Dco$. To this end let $\F^*$ be an ultrafilter containing $\FMS$. Then \begin{equation}
\label{eq:ms-order-def}
    \mu_1 \preceq_{\F^*} \mu_2
    \quad :\Leftrightarrow \quad
    [s(\mu_1)] \leqHR\,  [s(\mu_2)]
\end{equation}
defines a total order on $\Dco$. It should be noted here that this order still depends on the ultrafilter $\F^*$ containing $\FMS$ as we will show in \Cref{rem:dependece-on-utrafiler}. So from this perspective we have not gained much.


\section{Sufficient conditions for eventual dominance of the moments}
\label{sec:two_sufficient_conditions} 

In this section we discuss two positive results. It is shown that eventual dominance of the cumulative distribution functions can be guaranteed by dominance conditions on the cumulative distribution function or on densities. We will also show that these conditions are not necessary.

\begin{prop}\label{prop:cdf}
  Let $F_1, F_2$ be the cumulative distribution functions of
  $\mu_1,\mu_2\in \Dco$. 
  If there exists $x_0>0$ such that  $F_1(x_0) > F_2(x_0)$ and such
  that
  $F_1(x) \geq F_2(x)$ for all $x \geq
  x_0$ then $\mu_1 \preceq \mu_2$ (notice that the order is reversed).
\end{prop}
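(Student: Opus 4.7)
The plan is to convert the hypothesis on the CDFs into a quantitative bound on $s_n(\mu_2)-s_n(\mu_1)$ via integration by parts, and then extract the desired asymptotic sign. I treat the case of equal total mass $F_1(\infty)=F_2(\infty)=M$, which is the natural reading of the proposition in the probability-distribution context of the paper; without it the claim fails, as one sees by taking $\mu_1=\delta_1+\delta_2$, $\mu_2=\delta_1$, and $x_0=2$.

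Choose $0<a<b<\infty$ so that $\supp\mu_1\cup\supp\mu_2\subset[a,b]$, so that $F_i(t)=0$ for $t<a$ and $F_i(t)=M$ for $t\geq b$. Fubini applied to $t^n=\int_0^t n s^{n-1}\,ds$ (equivalently, Lebesgue--Stieltjes integration by parts) gives
\begin{equation*}
 s_n(\mu_i)=Mb^n-n\int_0^b t^{n-1}F_i(t)\,dt,
\end{equation*}
so the boundary terms cancel in the difference:
\begin{equation*}
 s_n(\mu_2)-s_n(\mu_1)=n\int_0^b t^{n-1}\bigl[F_1(t)-F_2(t)\bigr]\,dt.
\end{equation*}

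Next I would use right-continuity of the CDFs together with $F_1(x_0)>F_2(x_0)$ and the monotonicity of $F_1$ to produce $\delta>0$ and $\eta>0$ such that $F_1(t)-F_2(t)\geq\eta$ for $t\in[x_0,x_0+\delta]$. Splitting the integral at $x_0$ and at $x_0+\delta$, the contribution from $[x_0,x_0+\delta]$ is at least $\eta\bigl[(x_0+\delta)^n-x_0^n\bigr]$; the contribution from $[x_0+\delta,b]$ is nonnegative by hypothesis; and the contribution from $[0,x_0]$ has absolute value at most $Mx_0^n$ using $|F_1-F_2|\leq M$. Combining,
\begin{equation*}
 s_n(\mu_2)-s_n(\mu_1)\geq \eta(x_0+\delta)^n-(\eta+M)x_0^n.
\end{equation*}

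Finally, since $(x_0+\delta)/x_0>1$, the right-hand side tends to $+\infty$ and is positive for all sufficiently large $n$. Hence $s_n(\mu_1)\leq s_n(\mu_2)$ eventually, i.e.\ $\mu_1\preceq\mu_2$. I expect the main technical point to be the passage from the single-point strict inequality $F_1(x_0)>F_2(x_0)$ to a uniform strict lower bound $\eta$ on an interval of positive length, which relies essentially on right-continuity of the CDFs; the rest reduces to an elementary comparison of the exponential growth rates $(x_0+\delta)^n$ and $x_0^n$.
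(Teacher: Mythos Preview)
Your argument is correct and follows the same route as the paper: both convert the moment difference into $n\int_0^b t^{n-1}(F_1-F_2)\,dt$ via integration by parts, extract from right-continuity a subinterval $[x_0,x_0+\delta]$ (the paper's $[x_0,x_1]$) on which $F_1-F_2$ is bounded below by a positive constant, and then compare the growth rates $(x_0+\delta)^n$ versus $x_0^n$. Your write-up is in fact tidier than the paper's, which performs the same integration by parts through an auxiliary antiderivative $H$; the only slip is that your lower bound $\eta(x_0+\delta)^n-(\eta+M)x_0^n$ need not tend to $+\infty$ when $x_0+\delta<1$, but the eventual positivity you actually use still follows immediately from $(x_0+\delta)/x_0>1$.
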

\begin{proof}
  Let the supports of $\mu_1$ and $\mu_2$ be contained in
  $[a,b] \subset \R_{\geq 0}$.
  Since cumulative distribution functions are right-continuous and increasing, we
  can find $\varepsilon>0$ and $x_1 \in (x_0,b]$ such that
  $F_1(x) \geq \varepsilon \mathbbm{1}_{[x_0,x_1]}(x) +F_2(x)$ for all
  $x \in [x_0,b]$.
  Let $H$ be a continuously differentiable function on $[a,b]$ with
  positive
  derivative $h$. Then for $x
  \in [x_0,b]$ we have
  \begin{align*}
    \int_{t=a}^x 1 \,d\mu_1(t)
    &\geq \varepsilon \mathbbm{1}_{[x_0,x_1]}(x) +\int_{t=a}^x 1\,d\mu_2(t), \\
\intertext{and so}
    \int_{x=x_0}^b h(x)\int_{t=a}^x 1 \,d\mu_1(t)\,dx
    &\geq \varepsilon \int_{x=x_0}^bh(x)\mathbbm{1}_{[x_0,x_1]}(x)\,dx
      +\int_{x=x_0}^bh(x)\int_{t=a}^x 1 \,d\mu_2(t) \,dx,\\
    \int_{x=x_0}^b \int_{t=a}^x h(x) \,d\mu_1(t)\,dx
    &\geq \varepsilon \int_{x=x_0}^{x_1}h(x)\,dx
      +\int_{x=x_0}^b\int_{t=a}^x h(x) \,d\mu_2(t) \,dx.
  \end{align*}
  Reversing the order of integration gives
  \begin{multline*}
  \int_{t=a}^{x_0} \int_{x=x_0}^b  h(x) \,dx\,d\mu_1(t)+
    \int_{t=x_0}^{b} \int_{x=t}^b  h(x) \,dx\,d\mu_1(t)
    \\\geq \varepsilon (H(x_1)-H(x_0))+
    \int_{t=a}^{x_0} \int_{x=x_0}^b  h(x) \,dx\,d\mu_2(t)+
    \int_{t=x_0}^{b} \int_{x=t}^b  h(x) \,dx\,d\mu_2(t)
  \end{multline*}
  and thus
    \begin{multline*}
  \int_{t=a}^{x_0} H(b)-H(x_0) \,d\mu_1(t)+
    \int_{t=x_0}^{b} H(b)-H(t) \,d\mu_1(t)
    \\\geq \varepsilon (H(x_1)-H(x_0))+
    \int_{t=a}^{x_0} H(b)-H(x_0)\,d\mu_2(t)+
    \int_{t=x_0}^{b} H(b)-H(t)\,d\mu_2(t).
    \end{multline*}
    This can be simplified to
    \begin{multline*}
      H(b)F_1(b)-H(x_0)F_1(x_0)
      - \int_{t=x_0}^{b}H(t)\,d\mu_1(t)
      \\
      \geq \varepsilon (H(x_1)-H(x_0))+H(b)F_2(b)-H(x_0)F_2(x_0)
        - \int_{t=x_0}^{b}H(t)\,d\mu_2(t)
    \end{multline*}
    Using these estimates (where we set $H(x) = x^n$), we get the following estimate for the difference of the moments:
    \begin{align*}
      &\frac{1}{x_0^n}\left( \int_{t=a}^b t^n \,d\mu_2(t) -\int_{t=a}^b t^n
      \,d\mu_1(t)\right)\\
      &\geq-\mu_1([a,x_0])+\frac{1}{x_0^n}\left(\int_{t=x_0}^b t^n \,d\mu_2(t) -\int_{t=x_0}^b t^n
        \,d\mu_1(t)\right)\\
      &\geq-\mu_1([a,x_0])+\frac{\varepsilon(x_1^n-x_0^n)+b^n\left(F_2(b)-F_1(b))-x_0^n(F_2(x_0)-F_1(x_0)\right)}{x_0^n}\\
      \intertext{and using the fact that $F_1(b)=F_2(b)=1$ we obtain}        &\geq
          -\mu_1([a,x_0])+\varepsilon\left(\frac{x_1^n}{x_0^n}-1\right)
          +
          F_1(x_0)-F_2(x_0)
          \to \infty  \text{ for $n \to \infty$}.
    \end{align*}
    Therefore $\mu_1 \preceq \mu_2$.
  \end{proof}

The following lemma considers moments of compactly supported, signed measures with continuous density.

\begin{lem}\label{prop:density-signed}
  Let $f$ be a continuous functions with compact support contained in
  $\R_{\geq 0}$. If 
  there exists $x_0>0$ such that  $f(x_0) >0$ and such that $f(x) \geq 0$ for all $x \geq
  x_0$ then $\int_{\R} f(x) x^k\,dx> 0$ for all
  sufficiently large $k$.
  Furthermore, for every $t \in (0,x_0)$ we have
  $\frac{1}{t^k}\int_{\R} f(x) x^k\,dx \to \infty$ for $k \to \infty$.
\end{lem}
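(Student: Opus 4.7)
The plan is to prove the second (stronger) claim; the first follows at once because $t^{k}>0$. Let $\supp f\subseteq[0,b]$ and fix $t\in(0,x_{0})$. I would split the integral at $x_{0}$ and treat each piece separately:
\[
\int_{\R}f(x)x^{k}\,dx \;=\; \int_{0}^{x_{0}}f(x)x^{k}\,dx\;+\;\int_{x_{0}}^{b}f(x)x^{k}\,dx.
\]
The first piece is the only part that could be negative, but because $|f|\leq\|f\|_{\infty}$ it is crudely bounded in absolute value by $\|f\|_{\infty}\,x_{0}^{k+1}/(k+1)$.

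For the second piece, I would exploit continuity of $f$ at $x_{0}$ together with $f(x_{0})>0$: there is $\delta>0$ with $[x_{0},x_{0}+\delta]\subseteq[x_{0},b]$ and $f(x)\geq f(x_{0})/2$ on that interval. Since $f\geq 0$ throughout $[x_{0},b]$, dropping the rest gives
\[
\int_{x_{0}}^{b}f(x)x^{k}\,dx \;\geq\; \frac{f(x_{0})}{2(k+1)}\bigl[(x_{0}+\delta)^{k+1}-x_{0}^{k+1}\bigr].
\]
Combining the two bounds and dividing by $t^{k}$ yields
\[
\frac{1}{t^{k}}\int_{\R}f(x)x^{k}\,dx \;\geq\; \frac{(x_{0}+\delta)^{k+1}}{(k+1)\,t^{k}}\left[\frac{f(x_{0})}{2}-\left(\|f\|_{\infty}+\frac{f(x_{0})}{2}\right)\!\left(\frac{x_{0}}{x_{0}+\delta}\right)^{k+1}\right].
\]

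The bracket tends to $f(x_{0})/2>0$ as $k\to\infty$, while the prefactor $(x_{0}+\delta)^{k+1}/((k+1)t^{k})$ diverges to $+\infty$ because $(x_{0}+\delta)/t>1$, so the whole expression tends to $+\infty$. The main issue to watch is picking the right exponential scale: both the negative contribution from $[0,x_{0}]$ and the leading-order part of $[x_{0},x_{0}+\delta]$ naively look comparable ($\sim x_{0}^{k+1}$), so the argument hinges on using the upper endpoint $x_{0}+\delta$ (strictly larger than $x_{0}$) in the lower bound, which is where the continuity-based positivity on a nondegenerate interval to the right of $x_{0}$ is essential. Once this exponential beats $x_{0}^{k+1}$, the factor $1/t^{k}$ with $t<x_{0}<x_{0}+\delta$ is harmless.
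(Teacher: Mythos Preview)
Your proof is correct and follows essentially the same approach as the paper: split the integral at $x_{0}$, use continuity to get a lower bound $f\geq\varepsilon$ on a nondegenerate interval $[x_{0},x_{1}]$ (your $x_{1}=x_{0}+\delta$, $\varepsilon=f(x_{0})/2$), and compare the exponential rates $x_{0}^{k+1}$ versus $x_{1}^{k+1}$. The only cosmetic difference is that the paper first normalizes by $(k+1)/x_{1}^{k+1}$ to isolate the limit $\varepsilon$ and then observes $x_{1}^{k+1}/((k+1)t^{k})\to\infty$, whereas you go straight for the divergence and read off positivity afterward.
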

\begin{proof}
Let the support of $f$ be contained in
  $[a,b] \subset \R_{\geq 0}$.  Due to continuity we can find $x_1 \in (x_0,b]$ such that
  $f(x) > \varepsilon $ for all $x \in [x_0,x_1]$.
  We have
   \begin{align*}
     \frac{n+1}{x_1^{n+1}} \int_a^b f(x)x^n \,dx 
     &\geq\frac{n+1}{x_1^{n+1}} \left(\int_a^{x_0} f(x)x^n\,dx + \int_{x_0}^{x_1} f(x)x^n\,dx 
       \right) \\
     &\geq\frac{n+1}{x_1^{n+1}} \left(\int_a^{x_0} f(x)x^n\,dx + \int_{x_0}^{x_1} \varepsilon x^n\,dx\right) \\
     &\geq - \|f\|_\infty \frac{x_0^{n+1}-a^{n+1}}{x_1^{n+1}}  
     + \varepsilon
     \frac{x_1^{n+1} - x_0^{n+1}}{x_1^{n+1}}.
   \end{align*}
   As $x_0< x_1$, the first term in the last expression tends to $0$ and the second term tends to $\varepsilon$. This shows the eventual positivity of the moments of the measure with density $f$. The last claim now follows from the fact that
   $\frac{x_1^{n+1}}{t^{n+1}(n+1)}$ goes to infinity as $n$ goes to infinity.
\end{proof}
Using the preceding lemma we get a sufficient condition
for comparability of two measures with continuous density
in the tail order with respect to the densities.
For probability measures the proposition can also
be derived from \Cref{prop:cdf},
but we will need the slightly more general version later on.
A similar proposition can be found in \cite[Lemma 2.4]{rass2020gametheoretic_p1_v5}.
The proof provided there is problematic, so that we prefer to give a full proof here.
\begin{prop}\label{cor:density}
  Let $f_1,f_2$ be two non-negative continuous functions with compact
  support contained in
  $\R_{\geq 0}$. Assume $\int f_1 <\infty,  \int f_2 <\infty$. Let
  $\mu_1$ and $\mu_2$ be the measures with density $f_1$ and $f_2$
  with respect to the Lebesgue measure.
  If there exists $x_0>0$ such that  $f_1(x_0) < f_2(x_0)$ and such
  that $f_1(x) \leq f_2(x)$ for all $x \geq
  x_0$ then $\mu_1 \preceq \mu_2$.
\end{prop}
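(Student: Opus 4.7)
The plan is to reduce the statement immediately to \Cref{prop:density-signed} by considering the signed function $f := f_2 - f_1$. Since $f_1, f_2$ are continuous with compact support contained in $\R_{\geq 0}$, so is $f$. The hypotheses $f_1(x_0) < f_2(x_0)$ and $f_1(x) \le f_2(x)$ for all $x \ge x_0$ translate directly into $f(x_0) > 0$ and $f(x) \ge 0$ for $x \ge x_0$, which is exactly the hypothesis of the lemma.

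Applying \Cref{prop:density-signed} to $f$ then yields
\begin{equation*}
    s_k(\mu_2) - s_k(\mu_1) = \int_{\R} (f_2(x) - f_1(x)) x^k \, dx = \int_{\R} f(x) x^k \, dx > 0
\end{equation*}
for all sufficiently large $k$. By definition of the tail relation, this is exactly $\mu_1 \preceq \mu_2$, concluding the argument.

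There is essentially no obstacle in this proof: the whole point of formulating \Cref{prop:density-signed} for signed measures was to make this corollary an immediate application by subtraction. The only small point worth mentioning is that the hypothesis only posits $\int f_1, \int f_2 < \infty$ and compact support in $\R_{\geq 0}$, so $\mu_1, \mu_2$ lie in $\D$ (not necessarily in $\Dco$, whose support is bounded away from $0$), and the conclusion $\mu_1 \preceq \mu_2$ is meant in the tail order on $\D$, which does not require the bound on $a$.
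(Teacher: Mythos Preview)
Your proof is correct and mirrors the paper's own argument exactly: the paper's proof consists of the single sentence ``Apply \Cref{prop:density-signed} to $f_2-f_1$,'' which is precisely what you do. Your additional remarks about the hypotheses and the domain $\D$ versus $\Dco$ are accurate and do not affect the argument.
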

\begin{proof}
Apply \Cref{prop:density-signed} to $f_2-f_1$.
\end{proof}

\begin{thm}
\label{thm:sufficientconditionfororder}
    Let $\mathcal{F}$ be a class of piecewise continuous, compactly supported functions
    such that 
    for every pair $f_1,f_2 \in \mathcal{F}$ there are
    finitely many intervals covering the common support of $f_1$ and $f_2$
    such that on each of the given intervals
    one of the functions is larger or equal than the other.
    Then $\preceq$ is total when restricted to compactly supported absolutely continuous probability measures with densities in $\mathcal{F}$. 
\end{thm}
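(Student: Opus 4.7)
The plan is to reduce to \Cref{prop:density-signed} by isolating the rightmost interval on which the two densities disagree, and extracting the dominance direction there.

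I would fix two probability measures $\mu_1, \mu_2$ with densities $f_1, f_2 \in \mathcal{F}$, supported in some common compact interval $[a,b] \subset \R_{\geq 0}$. By the hypothesis on $\mathcal{F}$, there exist intervals $I_1, \ldots, I_m$ (which I order from left to right) covering the common support of $f_1$ and $f_2$, such that on each $I_j$ either $f_1 \ge f_2$ or $f_2 \ge f_1$ pointwise. If $f_1 = f_2$ almost everywhere on every $I_j$, then $\mu_1 = \mu_2$ and $\mu_1 \preceq \mu_2$ holds trivially. Otherwise, let $I_k$ be the rightmost interval on which $f_1 \ne f_2$ on a set of positive Lebesgue measure. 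One of the two densities dominates throughout $I_k$ by hypothesis; I may assume without loss of generality that $f_2 \ge f_1$ on $I_k$, since the opposite case yields $\mu_2 \preceq \mu_1$ by a symmetric argument.

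Since $f_1, f_2$ are piecewise continuous and differ on a set of positive measure inside $I_k$, I can choose a common continuity point $x_0 \in I_k$ with $f_2(x_0) > f_1(x_0)$. By the maximality of $k$, on each $I_j$ with $j > k$ the two densities agree almost everywhere, and by piecewise continuity they then agree outside a finite set of points. Consequently the signed function $g := f_2 - f_1$ is bounded, compactly supported, piecewise continuous, continuous at $x_0$ with $g(x_0) > 0$, and satisfies $g(x) \ge 0$ for all $x \ge x_0$ outside a finite set. Applying (the piecewise continuous version of) \Cref{prop:density-signed} to $g$ then gives $\int g(x)\, x^n \, dx > 0$ for all sufficiently large $n$, that is, $s_n(\mu_2) > s_n(\mu_1)$ for large $n$, which is precisely $\mu_1 \preceq \mu_2$.

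The only subtle point is that \Cref{prop:density-signed} is formally stated for continuous densities, whereas here $g$ is merely piecewise continuous. However, inspecting the existing proof shows that continuity of the density is used only at the single point $x_0$ (to produce an interval $[x_0, x_1]$ on which $g \ge \varepsilon$); everything else uses only boundedness of $g$, nonnegativity of $g$ to the right of $x_0$, and the vanishing of its contribution on the finitely many bad points. So this extension is essentially cosmetic and no new estimates are required, making the main obstacle just the bookkeeping of selecting the correct interval $I_k$ and a common continuity point inside it.
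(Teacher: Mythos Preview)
Your proof is correct and follows essentially the same route as the paper, whose entire proof is the one-liner ``This is a direct consequence of \Cref{cor:density}'' (itself just \Cref{prop:density-signed} applied to $f_2-f_1$). Your additional bookkeeping---selecting the rightmost interval on which the densities differ, choosing a common continuity point, and noting that the proof of \Cref{prop:density-signed} only uses continuity at $x_0$---fills in details the paper leaves implicit, in particular the passage from continuous to piecewise continuous densities.
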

\begin{proof}
This is a direct consequence of \Cref{cor:density}.
\end{proof}

The following statement provides a list of examples of classes of measures in $\Dco$ for which the conditions of Theorem~\ref{thm:sufficientconditionfororder} are satisfied and for which the tail order is total. This extends \cite[Theorem~3]{rassDecisionsUncertainConsequences2016a}, where it is already shown that the tail order restricted to finitely supported measures corresponds to a lexicographic order of the probability mass vectors and is thus total on this class.

\begin{cor}
\label{cor:sufficientconditionfororder}
    The tail order $\preceq$ is total when restricted to
    probability measures with compact support in $\R_{\geq 0}$ and
    \begin{enumerate}[(i)]
        \item constant densities,
        \item polynomial densities,
        \item analytic densities,
        \item piecewise versions of the above.
    \end{enumerate}
\end{cor}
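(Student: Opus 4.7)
The plan is to verify, case by case, the hypothesis of Theorem~\ref{thm:sufficientconditionfororder}: for any two densities $f_1, f_2$ in the relevant class, I would exhibit a finite partition of their common support on each piece of which one of $f_1, f_2$ dominates the other. Totality of $\preceq$ on the class then follows immediately.

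For (i) the common support of two densities that are constant on compact intervals is itself a compact interval on which both are constants, so the trivial one-piece partition works. For (ii) I would observe that $f_1 - f_2$ is a polynomial; either it vanishes identically (in which case $f_1 = f_2$ on the common support and the claim is trivial) or it has only finitely many zeros on the compact common support and is of constant sign between consecutive zeros. For (iii) the argument is structurally the same with polynomials replaced by real analytic functions: interpreting ``analytic density'' in the standard way as being real analytic on an open neighborhood of the compact support, the difference $f_1 - f_2$ is real analytic on a neighborhood of each connected component of the common support, and by the identity theorem either vanishes identically on that component or has only isolated zeros, which by compactness are finite in number. Case (iv) reduces to (i)--(iii) by first taking a common refinement of the finitely many breakpoints defining the piecewise structures of $f_1$ and $f_2$ — still a finite set — and then applying the corresponding non-piecewise argument inside each resulting subinterval.

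The main place that requires genuine thought is the analytic case (iii): a difference of real analytic functions can fail to have isolated zeros if it vanishes on a whole connected component of its domain, and one needs the identity theorem precisely to split this cleanly into the two harmless alternatives. Cases (i), (ii) and (iv) are then pure bookkeeping once this is in place, and so the whole corollary reduces to a short invocation of Theorem~\ref{thm:sufficientconditionfororder}.
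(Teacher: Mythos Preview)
Your proposal is correct and follows essentially the same route as the paper: verify the hypothesis of Theorem~\ref{thm:sufficientconditionfororder} using the identity theorem for analytic functions to guarantee that $f_1-f_2$ has only finitely many sign changes on each piece. The paper streamlines the argument slightly by observing at the outset that cases (i)--(iii) are all special instances of (iv) with analytic pieces, so only the piecewise-analytic case needs to be treated, whereas you handle the cases separately and then reduce (iv) to the earlier ones; the substance is identical.
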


\begin{proof}
It is sufficient to consider the case of piecewise analytic densities as this case encompasses all the others. Here the claim follows from the identity theorem from complex analysis applied to the (finitely many) subintervals on which both functions are analytic.
\end{proof}

\begin{rem}
It is appropriate to point out that the use of Theorem~\ref{thm:sufficientconditionfororder} is not restricted to the cases described in Corollary~\ref{cor:sufficientconditionfororder}. Further examples can be constructed in the following way: Let $\phi:[c,d]\to[a,b]$ be a homeomorphism. Given a measure $\mu$ supported on $[a,b]$, we can pull it back via $\phi$ to a measure supported on $[c,d]$. Given a class of measures on $[a,b]$ which satisfies one of the conditions of Corollary~\ref{cor:sufficientconditionfororder}, then this yields a class of measures on $[c,d]$ which satisfies the assumptions of Theorem~\ref{thm:sufficientconditionfororder} but not necessarily those of the corollary.
\end{rem}

The two sufficient conditions in \Cref{prop:cdf} and \Cref{cor:density} do not provide characterizations of the totality of the tail order. Neither is necessary as soon as measures with infinite support are involved. The following result builds on \cite[Examples 4.22, 4.23]{buergin2020bachelorThesis}. 

\begin{figure}
    \centering
    \includegraphics[scale=0.8]{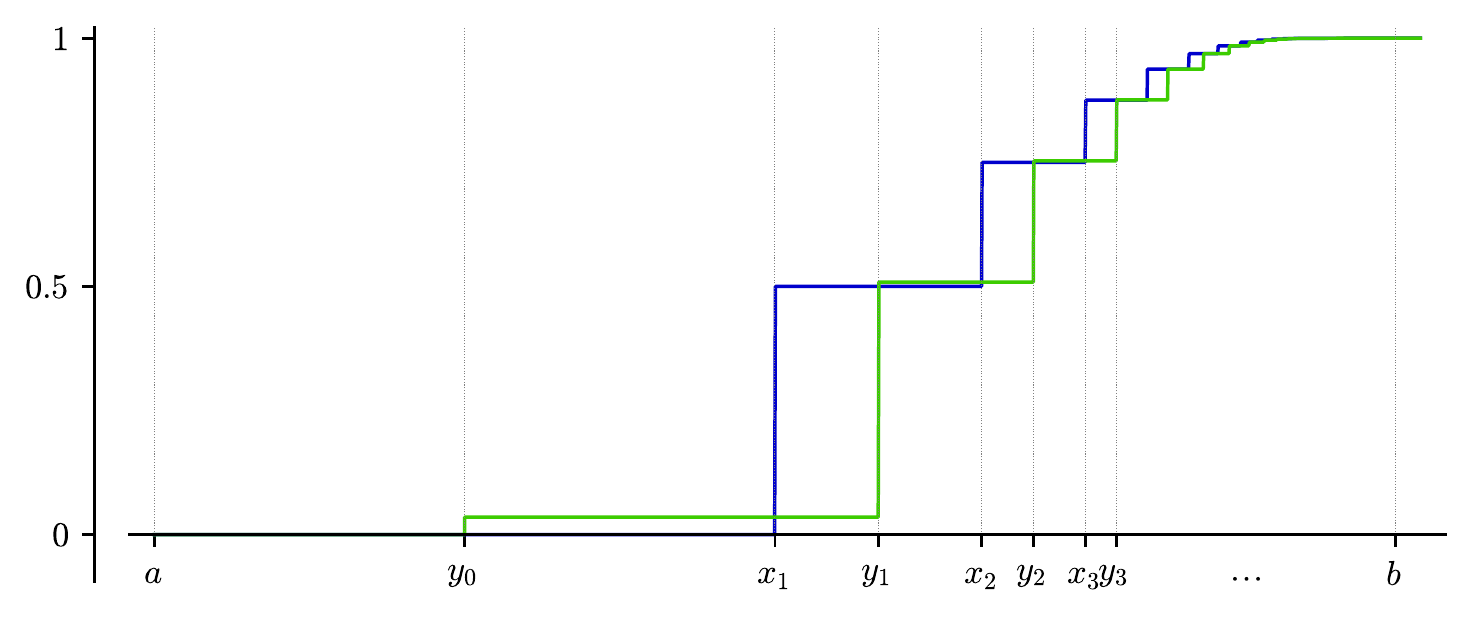}
    \caption{Cumulative distribution functions of the measures constructed in the proof of  
    \Cref{thm:nodominanceofdensitiesrequired}}
    \label{fig:my_label}
\end{figure}

\begin{thm}~
\label{thm:nodominanceofdensitiesrequired}
\begin{enumerate}[(i)]
    \item
    There exist discrete probability measures $\mu_f, \mu_g$ supported on a bounded, countably infinite subset of $\R_{\geq0}$ such that $\mu_f \preceq \mu_g$, and both the probability mass functions $f, g$ and the cumulative distribution functions $F,G$ alternate.
    \label{item:SufficientTailOrderConditionsNotNecessary-discretePart}
    
    \item
    There exist absolutely continuous probability measures $\mu_{\tilde{f}}, \mu_{\tilde{g}}$ supported on a compact subset of $\R_{\geq0}$ such that $\mu_{\tilde{f}} \preceq \mu_{\tilde{g}}$, both the probability density functions $\tilde{f}, \tilde{g}$ and the cumulative distribution functions $\tilde{F}, \tilde{G}$ alternate, and $\tilde{f}, \tilde{g}$ are continuous.
    \label{item:SufficientTailOrderConditionsNotNecessary-ACPart}
\end{enumerate}
\end{thm}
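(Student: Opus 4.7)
The plan is a single ``little shifts'' construction that yields both parts simultaneously. First I would fix an accumulation point $x^*>0$, a strictly decreasing sequence $x_n\downarrow x^*$ inside some interval $[a,b]\subset(0,\infty)$, and summable positive weights $(p_n)$ with $\sum_n p_n=1$. For part~(i) I would set $\mu_g:=\sum_n p_n\delta_{x_n}$ and $\mu_f:=\sum_n p_n\delta_{y_n}$ with perturbed locations $y_n:=x_n+(-1)^n\delta_n$, the shifts $\delta_n>0$ chosen so small that the intervals $(x_n-\delta_n,\,x_n+\delta_n)$ are pairwise disjoint and $y_1<x_1$. In particular $x_1$ remains the unique global maximum of the combined support.

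The three properties to verify then all reduce to local inspection near each pair of atoms. Between consecutive bump regions, $F$ and $G$ coincide, while inside the $n$-th region the cdf whose atom lies further to the left exceeds the other by exactly $p_n$; since the sign of the shift alternates with $n$, the differences $F-G$ and $f-g$ take both strictly positive and strictly negative values in every neighborhood of $x^*$. For the tail order I would expand
\[
s_k(\mu_g)-s_k(\mu_f)=\sum_{n\geq 1}p_n\bigl(x_n^k-y_n^k\bigr),
\]
bound the $n\geq 2$ summands in modulus by $k\delta_n p_n \max(x_n,y_n)^{k-1}$, and isolate the $n=1$ summand $p_1(x_1^k-(x_1-\delta_1)^k)\sim p_1 x_1^k$. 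Because $\max(x_n,y_n)<x_1$ for all $n\geq 2$, the $n=1$ contribution dominates the remainder exponentially in $k$, so the difference is eventually positive and $\mu_f\preceq\mu_g$ follows.

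For part~(ii) I would replace each point mass $p_n\delta_\cdot$ by a continuous bump $p_n\rho_n(\cdot-\text{center})$ of small width $\epsilon_n$ and unit total mass, producing densities $\tilde f,\tilde g$ whose support components are pairwise disjoint. The alternation arguments and the moment-tail estimate above then transcribe verbatim. The main obstacle, and essentially the only real work, is a scale-balancing exercise: disjointness forces $\epsilon_n<\delta_n$ and $\delta_n+\epsilon_n<(x_{n-1}-x_n)/2$, while continuity of $\tilde f,\tilde g$ at $x^*$ forces the peak heights $\lesssim p_n/\epsilon_n$ to tend to $0$, i.e.\ $p_n\ll\epsilon_n$. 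Both constraints can be met as soon as $p_n\ll x_{n-1}-x_n$, which is easily arranged by taking, for example, $x_n=x^*+1/n$ and $p_n=C/n^4$, together with $\epsilon_n\sim 1/n^3$ and $\delta_n\sim 1/n^{5/2}$.
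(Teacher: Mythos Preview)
Your construction is correct and is genuinely different from the paper's.

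The paper places the atoms of $\mu_f$ at an \emph{increasing} sequence $x_i\uparrow$ accumulating at the right endpoint, shifts every atom \emph{in the same direction} to $y_i=(x_i+x_{i+1})/2$, and then rescales each mass by a carefully chosen factor $c_i<1$ satisfying two constraints: $c_i\ge x_i/y_i$ forces $g(y_i)y_i\ge f(x_i)x_i$, so the moment difference is a sum of nonnegative terms, while $c_i\ge 1-2^{-i}$ is what makes the CDFs cross, via a geometric-series estimate. The leftover mass is parked at a new leftmost atom $y_0$.

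Your route avoids all of this bookkeeping by putting the accumulation point on the \emph{left} and leaving the weights untouched. Because $x_1$ is then the strict maximum of the combined support and belongs only to $\mu_g$, moment dominance is immediate: the single term $p_1\bigl(x_1^k-(x_1-\delta_1)^k\bigr)\sim p_1x_1^k$ beats the tail $O(kM_2^{k-1})$ with $M_2<x_1$. Likewise, CDF alternation is built in by the alternating sign of the shifts rather than engineered through the $c_i$. The price is only that your example no longer has the ``interesting'' behaviour concentrated at the right edge of the support, which is the regime the paper is thematically concerned with; analytically your argument is shorter and more transparent. For part~(ii) both proofs smear the atoms into bumps, and your scale-balancing $p_n\ll\epsilon_n<\delta_n\ll x_{n-1}-x_n$ is exactly the right hierarchy.
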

\begin{proof}
We first construct a counterexample for the discrete case. The same construction is then adapted for the absolutely continuous case.

(i) Let $1 < a < b$ and consider a set $X:=\set{x_i \mid i \in \N_{\geq 1}} \subset [a, b)$, indexed in strictly increasing order.
We start with a discrete distribution $\mu_f$ 
that gives positive probability mass precisely to the points in 
$X$ and with probability mass function $f$ which is strictly decreasing on $X$, i.e., $ f(x_i) > f(x_j)$ if $i<j$. We construct $\mu_g$ with mass function $g$, where each support point is shifted slightly to the right, with its probability adjusted to be only a little smaller, giving the remaining probability mass to the point $y_0 \coloneqq \frac{1+a}{2}$.
First, set $y_i \coloneqq \frac{x_i+ x_{i+1}}{2}, i \in \N_{\geq 1}$.
We adjust the probability assigned to $y_i$ by a factor $c_i < 1$, i.e. $g(y_i) = c_i f(x_i)$, chosen such that the following properties are satisfied:
\begin{enumerate}
    \item $c_i \geq \frac{x_i}{y_i}$,
    \item $c_i \geq 1-\frac{1}{2^i}$.
\end{enumerate}
As we will show shortly, the first property ensures that $\mu_g$ has greater moments than $\mu_f$, and the second property ensures that the distribution functions alternate.
In summary, we define $g$ as follows:

\begin{gather*}
    g(y_i) = c_i f(x_i), \quad
    c_i \coloneqq \max\biggset{\frac{x_i}{y_i}, 1-\frac{1}{2^i}
    } < 1, \quad
    g(y_0) = \sum_{i \geq 1} (1-c_i) f(x_i).
\end{gather*}
The construction is sketched in \Cref{fig:my_label}.
Obviously the mass functions $f, g$ alternate, because $f(x_i) > 0, g(x_i) = 0$ and $g(y_i) > 0, f(y_i) = 0$ for all $i \in \N_{\geq 1}$. We now show that the moments of $\mu_g$ dominate the moments of $\mu_f$, and the distribution functions alternate.
The first property of the $c_i$ implies that $\forall i \in \N_{\geq 1}: g(y_i)y_i  \geq  f(x_i)x_i$, so for the moments we have:
\begin{multline*}
    s_n(\mu_g) - s_n(\mu_f) \\
    = g(y_0)y_0^n + \sum_{i \geq 1} g(y_i) y_i^n - f(x_i) x_i^n
    \geq g(y_0)y_0^n + \sum_{i \geq 1} g(y_i) y_i x_i^{n-1} - f(x_i) x_i^n  \\
    = g(y_0)y_0^n + \sum_{i \geq 1} x_i^{n-1} \undersetbrace{(g(y_i)y_i - f(x_i)x_i)}{\geq 0}
    \geq g(y_0)y_0^n > 0.
\end{multline*}
Secondly, the distribution functions alternate: On the one hand for all $k \in \N_{\geq 1}$, 
\begin{multline*}
    G(y_k) 
    = g(y_0) + \sum_{i=1}^k c_i f(x_i) 
    = g(y_0) - \biggpars{\sum_{i=1}^k (1-c_i)f(x_i)} + F(x_k) \\
    = \biggpars{\sum_{i \geq k+1} (1-c_i)f(x_i)} + F(x_k) > F(x_k) = F(y_k).
\end{multline*}
On the other hand, 
\begin{multline*}
    G(x_k)
    = g(y_0) + \sum_{i=1}^{k-1} c_i f(x_i) 
    = g(y_0) + \sum_{i=1}^{k-1} f(x_i) - \sum_{i=1}^{k-1} (1-c_i) f(x_i) \\
    = F(x_k) - f(x_k) + g(y_0) - \sum_{i=1}^{k-1} (1-c_i) f(x_i)  
    = F(x_k) - f(x_k) + \sum_{i \geq k} (1-c_i) f(x_i).
\end{multline*}
Using the second property of the $c_i$, the strict monotonicity of $f$ on its support, and the geometric series identity $\sum_{i \geq k} \frac{1}{2^i} = 
\frac{1}{2^{k-1}}$, we further get:
\begin{gather*}
    \sum_{i \geq k} (1-c_i)f(x_i) \leq \sum_{i \geq k} \frac{1}{2^i} f(x_i) < \sum_{i \geq k} \frac{1}{2^i} f(x_k) = \frac{1}{2^{k-1}} f(x_k) \leq f(x_k).
\end{gather*}
This shows that $G(x_k) < F(x_k)$. 
So in summary we have $\mu_f \prec \mu_g$, and their probability mass, as well as distribution functions alternate.

(ii) The discrete construction can be easily adapted to the continuous case, see \cite[Example 4.23]{buergin2020bachelorThesis} for full details. For each $k$, let $z_k = \frac{y_{k-1} + x_k}{2}$. We shift the probability mass that $\mu_f$ puts on $x_k$ to the interval $[z_k, x_k]$, and the probability mass that $\mu_g$ puts on $y_k$ to the interval $[y_k, z_{k+1}]$, by defining densities
\begin{gather}
    \tilde{f} = \sum_{k \geq 1}  f(x_k) h_{[z_k, x_k]}, \quad
    \tilde{g} = \sum_{k \geq 0}  g(y_k) h_{[y_k, z_{k+1}]},
    \label{eq:acSufficientTailOrderConditionCounterexample-densitiesDefinition}
\end{gather}
where the $h_{[d, e]}$ are appropriately scaled versions of a continuous probability density $h$ which are supported on $[d, e]$, respectively.
Since the probability mass of $\mu_f$ is shifted to the left and the probability mass of $\mu_g$ is shifted to the right, it can be shown that the new absolutely continuous measures $\mu_{\tilde{f}}, \mu_{\tilde{g}}$ still satisfy $\mu_{\tilde{f}} \prec \mu_{\tilde{g}}$. The distribution functions still alternate, because
\begin{gather*}
    \tilde{F}(x_k) = F(x_k) > G(x_k) = \tilde{G}(x_k),\quad
    \tilde{G}(z_{k+1}) = G(y_k) > F(y_k) = \tilde{F}(z_{k+1}).
\end{gather*}
The densities $\tilde{f}, \tilde{g}$ are continuous if the series in \eqref{eq:acSufficientTailOrderConditionCounterexample-densitiesDefinition} converge uniformly. This can be achieved if the probability masses $f(x_k)$ approach zero asymptotically faster than the interval lengths. For example, we can use $x_k = 2-\frac{1}{k+1}$ and $f(x_k) = \frac{1}{2^k}$.

\end{proof}

\section{Signed measures with infinitely many vanishing moments}
\label{sec:continuousfunction-vanishingmoments}

In this section we provide the counterexamples that are used in the proof of Proposition~\ref{prop:no-embedding}.
Our goal is thus to construct a pair of probability measures
with support $[a,b]$, $0<a<b$, with continuous densities with respect
to the Lebesgue measure for
which infinitely many moments agree. To do so, we construct
a signed measure with support $[a,b]$ and continuous density for which
infinitely
many moments vanish.

 Let $(n_k)_{k \in \N}$ be a Müntz-Szász sequence.
  Using a standard Hahn-Banach argument found e.g. in \cite{giraudo2020mathoverflow}
  one can construct a signed measure
  with support on a compact
  interval of positive reals such that
  its $n_k$th moments vanish. Using the $L_1$ version of the Müntz-Szász theorem (\Cref{thm:muntz-szasz}) one can also ensure that the obtained signed measure has an $L^\infty$ density.

  To strengthen this to a continuous density, we apparently
  need a more hands-on approach, as the dual pair $(C[a,b],C[a,b])$ is
  not very accessible to standard functional analytic techniques.
  Also notice that, as mentioned in \Cref{cor:sufficientconditionfororder} we cannot
  hope for a polynomial or analytic density here.
  
The following lemma is the key to this construction.
\begin{lem}\label{lem:first-k-moments}
  For every $a,b \in \R$ with $0<a<b$ and $n \in \N$ there is a $C^\infty$
  function $f$ with support contained in $[a,b]$ and values in $[-1,1]$ such that
  $\int_a^b f(x) x^k \,dx = 0$ for all $k \in \{0,\dots,n\}$
  and such that there is $x_0 \in (a,b)$ with $f(x_0)>0$ and $f(x) \geq 0$
  for $x \in [x_0,b]$.
\end{lem}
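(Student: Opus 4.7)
The plan is to build $f$ as a sum $f=\varphi+\psi$ of two smooth nonnegative bumps with disjoint supports: a bump $\varphi$ placed near the right end of $[a,b]$, which provides the required positivity on $[x_0,b]$, plus a correction term $\psi$ supported strictly to the left of $x_0$, whose role is only to cancel the first $n+1$ moments of $\varphi$. Concretely, I would first fix $a<\alpha<\beta<b$, and let $\varphi\in C^\infty(\R)$ be a standard bump with $\varphi\ge 0$, $\supp\varphi\subset[\beta,b]$, and $\varphi>0$ on a subinterval of $(\beta,b)$. I would also fix an auxiliary $C^\infty$ bump $\chi\ge 0$ with $\supp\chi\subset[a,\alpha]$ and $\chi\not\equiv 0$.

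The correction $\psi$ will be sought in the finite-dimensional form $\psi(x)=\chi(x)\,P(x)$, where $P(x)=\sum_{j=0}^{n}c_j x^j$ is a polynomial of degree at most $n$. Then $\psi\in C^\infty(\R)$ with $\supp\psi\subset[a,\alpha]$, and the moment conditions $\int(\varphi+\psi)(x)\,x^k\,dx=0$ for $k=0,\dots,n$ translate into the linear system $Mc=-b$, where
\begin{equation*}
M_{kj}=\int_a^\alpha \chi(x)\,x^{j+k}\,dx,\qquad b_k=\int_\beta^b \varphi(x)\,x^k\,dx,\qquad 0\le j,k\le n.
\end{equation*}
The matrix $M$ is of Hankel type and positive definite: for any $c\in\R^{n+1}\setminus\{0\}$ one has $c^{\top}Mc=\int_a^\alpha\chi(x)\bigl(\sum_j c_j x^j\bigr)^2 dx>0$, since $\chi$ is nonnegative and nontrivial while a nonzero polynomial can have only finitely many zeros in $\supp\chi$. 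This is the key step, and really the only nontrivial one; the rest of the argument is essentially bookkeeping.

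Having obtained $c$, set $f_0=\varphi+\psi$. Then $f_0\in C^\infty(\R)$ with $\supp f_0\subset[a,\alpha]\cup[\beta,b]\subset[a,b]$, and $\int_a^b f_0(x)x^k\,dx=0$ for $k=0,\dots,n$ by construction. Choose any $x_0\in(\beta,b)$ with $\varphi(x_0)>0$; since $\supp\psi\subset[a,\alpha]$ is disjoint from $[\beta,b]$, one has $f_0(x_0)=\varphi(x_0)>0$ and $f_0(x)=\varphi(x)\ge 0$ for every $x\in[x_0,b]$. Finally, to enforce values in $[-1,1]$, replace $f_0$ by $f:=f_0/\max(1,\|f_0\|_\infty)$; all the asserted properties are preserved by positive scaling, and $\|f_0\|_\infty$ is finite because $f_0$ is continuous and compactly supported.
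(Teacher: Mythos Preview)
Your argument is correct, modulo a minor slip in the opening sentence: you describe $\psi$ as a ``nonnegative bump'', but of course $\psi(x)=\chi(x)P(x)$ will in general change sign, and this is essential --- otherwise one could never cancel the strictly positive moments of $\varphi$. The actual construction you carry out does not rely on $\psi\ge 0$, so the proof stands.

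Your route differs from the paper's. The paper chooses $n+2$ nonnegative $C^\infty$ bumps $g_1,\dots,g_{n+2}$ with pairwise disjoint supports in $[a,b]$, sets $f=\sum_i c_i g_i$, and observes that the $n+1$ moment equations form an underdetermined homogeneous system in the $n+2$ unknowns $c_i$, hence admit a nontrivial solution; disjoint supports make $f\not\equiv 0$, and after rescaling one negates if necessary so that the rightmost nonzero coefficient is positive. Your approach instead fixes the right-hand bump $\varphi$ in advance and solves an \emph{inhomogeneous} square system for the correction $\psi=\chi P$, with invertibility coming from positive definiteness of the Hankel moment matrix of $\chi$. The paper's argument is a bare dimension count and is marginally shorter; your argument is slightly more constructive, avoids the final ``possibly negate'' step since $\varphi$ is placed deliberately on the right, and makes the positivity on $[x_0,b]$ immediate from disjointness of $\supp\psi$ and $\supp\varphi$. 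Both are perfectly adequate for the lemma.
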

\begin{proof}
  Let $g_1,\dots,g_{n+2}$ be non-trivial $C^\infty$ functions with compact
  pairwise disjoint support in $[a,b]$ and values in $[0,1]$.
  Our function $f$ will be of the form $\sum_{i=1}^{n+2} c_ig_i$ with
  $c_1,\dots,c_{n+2} \in \R$.
  The condition $\int_a^b f(x) x^k \,dx = 0$ for $k \in
  \{0,\dots,n\}$ gives rise to $n+1$ linear equations in the $n+2$
  variables $c_1,\dots,c_{n+2}$ and thus has a non-trivial solution.
  Since $g_1,\dots,g_{n+2}$ have disjoint support, the resulting
  function $f$ is also non-trivial. Since it is bounded, we can scale
  it to only obtain values in $[-1,1]$. By possibly replacing the
  function
  with its negation, we can ensure that the last non-zero coefficient
  among $c_1,\dots,c_{n+2}$ is positive.
\end{proof}

\begin{thm}\label{thm:continuous-infinitely-many-moments}
  For every $a,b \in \R$ with $0<a<b$ there exists a signed measure
  with integral $0$, support contained in $[a,b]$ and $C^\infty$ density for which infinitely
  many moments vanish. 
\end{thm}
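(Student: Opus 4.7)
My plan is to build the desired signed measure as a rapidly convergent $C^\infty$ series $f = \sum_{k \geq 0} c_k \phi_k$, in which each $\phi_k$ is produced by Lemma~\ref{lem:first-k-moments}. First I would fix pairwise disjoint closed intervals $J_k \subset (a,b)$, $k \geq 0$, with lengths $L_k \searrow 0$ and right endpoints accumulating at $b$, and apply Lemma~\ref{lem:first-k-moments} on each $J_k$ with parameter $n = k$ to obtain $\phi_k \in C_c^\infty(J_k)$ with $\|\phi_k\|_\infty \leq 1$, $\int \phi_k(x) x^j\,dx = 0$ for $j = 0, 1, \ldots, k$, and a point $x_0^{(k)} \in J_k$ at which $\phi_k(x_0^{(k)}) > 0$ and $\phi_k \geq 0$ on $[x_0^{(k)}, b]$. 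Lemma~\ref{prop:density-signed} applied to $\phi_k$ then yields an index $N_k \in \N$ and a value $t_k \in (0, x_0^{(k)})$ such that $\int \phi_k(x) x^n\,dx \geq t_k^n > 0$ for every $n \geq N_k$. Since each $\phi_k$ has integral zero, any $L^1$-convergent sum $f = \sum c_k \phi_k$ automatically satisfies $\int f\,dx = 0$, taking care of the integral-zero requirement.

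The decisive structural property is that the vanishing moments of $\phi_k$ make the moment map triangular: if the series converges in $L^1$, then $\int f(x) x^m\,dx = \sum_{k < m} c_k \int \phi_k(x) x^m\,dx$ is a \emph{finite} sum. I would then inductively select a strictly increasing sequence $0 < m_1 < m_2 < \cdots$ of indices to annihilate together with the coefficients $c_k$. Set $c_0 := 1$. At stage $j \geq 1$, assuming $c_0, \ldots, c_{K_{j-1}}$ have been fixed so that the current partial sum $f_{j-1}$ already annihilates $m_1, \ldots, m_{j-1}$, I would pick $m_j$ so that $m_j > K_{j-1}$ and $m_j \geq N_{K_{j-1}+1}$, which guarantees $\int \phi_{K_{j-1}+1}(x) x^{m_j}\,dx > 0$. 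Set $c_{K_{j-1}+2} = \cdots = c_{m_j - 1} = 0$ and solve the single linear equation $c_{K_{j-1}+1} \int \phi_{K_{j-1}+1}(x) x^{m_j}\,dx = -\sum_{k \leq K_{j-1}} c_k \int \phi_k(x) x^{m_j}\,dx$ for $c_{K_{j-1}+1}$; then set $K_j := m_j - 1$. After the step, the partial sum $f_j$ annihilates $m_1, \ldots, m_j$.

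The main obstacle will be verifying convergence of $\sum_k c_k \phi_k$ in every $C^N$ norm, so that $f \in C_c^\infty([a,b])$. The formula for $c_{K_{j-1}+1}$, combined with the elementary bound $|\int \phi_k(x) x^{m_j}\,dx| \leq L_k b^{m_j}$ and the lower bound $\int \phi_{K_{j-1}+1}(x) x^{m_j}\,dx \geq t_{K_{j-1}+1}^{m_j}$, yields $|c_{K_{j-1}+1}| \leq \|f_{j-1}\|_{L^1} (b/t_{K_{j-1}+1})^{m_j}$. By placing the intervals $J_k$ so that $t_k \to b$, and taking $m_j$ just at the threshold $N_{K_{j-1}+1}$, the factor $(b/t_{K_{j-1}+1})^{m_j}$ is kept bounded. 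Balancing this against the growth $\|\phi_k\|_{C^N} \lesssim L_k^{-N}$ requires careful bookkeeping: the $L_k$ should shrink sufficiently slowly, while the gaps between $K_{j-1}$ and $m_j$ should be large enough that $\sum_k |c_k|\,\|\phi_k\|_{C^N}$ is finite for every $N$. Once $C^\infty$-convergence is secured, the resulting $f$ has compact support in $[a,b]$, vanishing integral, and $\int f\, x^{m_j}\,dx = 0$ for every $j$, giving the required signed measure with infinitely many vanishing moments.
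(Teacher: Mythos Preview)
Your architecture is essentially the paper's: build $f$ as a $C^\infty$ series of bumps $\phi_k$ produced by Lemma~\ref{lem:first-k-moments}, use the vanishing low moments to make the moment map triangular, and inductively pick exponents $m_j$ and coefficients so that one new moment is killed at each step without disturbing the previous ones. The place where your argument diverges from the paper, and where it currently has a genuine gap, is the control of $|c_{K_{j-1}+1}|$.

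Your estimate $|c_{K_{j-1}+1}|\le \|f_{j-1}\|_{L^1}\,(b/t_{K_{j-1}+1})^{m_j}$ comes from the crude numerator bound $\bigl|\int f_{j-1}(x)x^{m_j}\,dx\bigr|\le b^{m_j}\|f_{j-1}\|_{L^1}$. Since $t_{K_{j-1}+1}<b$, this bound \emph{grows} with $m_j$, and your proposed remedy (drive $t_k\to b$ while taking $m_j=N_{K_{j-1}+1}$) cannot be justified: $N_k$ is determined by $\phi_k$ and $t_k$, which you have already fixed a priori, and nothing prevents $N_k$ from growing so fast that $(b/t_k)^{N_k}\to\infty$. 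The ``careful bookkeeping'' you allude to would have to control simultaneously the oscillation of $\phi_k$ (which has $k+1$ vanishing moments on a shrinking interval), the threshold $N_k$, and $\|\phi_k\|_{C^N}$; as written this is not carried out.

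The paper closes exactly this gap with a sharper use of the support geometry that you have in fact set up but did not exploit. If the $J_k$ are ordered from left to right, then $\operatorname{supp} f_{j-1}\subset[a,\sup J_{K_{j-1}}]$ lies strictly to the left of $J_{K_{j-1}+1}$. Hence $\bigl|\int f_{j-1}(x)x^{m_j}\,dx\bigr|\le (\sup J_{K_{j-1}})^{m_j}\|f_{j-1}\|_{L^1}$, while Lemma~\ref{prop:density-signed} applied with $t=\inf J_{K_{j-1}+1}$ gives $\int \phi_{K_{j-1}+1}(x)x^{m_j}\,dx\to\infty$ relative to $t^{m_j}$. The ratio therefore tends to $0$ as $m_j\to\infty$, so one can choose $m_j$ with $|c_{K_{j-1}+1}|<1$ (indeed, as small as one likes). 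The paper pushes this one step further and makes the $C^\infty$ convergence trivial: instead of fixing all $\phi_k$ in advance, it chooses the next bump $\tilde g_{n+1}$ \emph{after} $h_n$ and $k_n$ are known, with its first $k_n$ moments vanishing and its first $n$ derivatives bounded by $1/n$ (just rescale the output of Lemma~\ref{lem:first-k-moments}). Since the scaling coefficient satisfies $|a_{\ell_0}|<1$, these derivative bounds survive, and $\sum g_n$ converges in every $C^N$ automatically. Replacing your a priori $\phi_k$'s by this adaptive choice, or at least replacing $(b/t)^{m_j}$ by the correct ratio $(\sup J_{K_{j-1}}/\inf J_{K_{j-1}+1})^{m_j}<1$, is what is needed to complete your argument.
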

\begin{proof}
  Let $(t_i)_{i \in \N}$ be a strictly increasing sequence in
  $[a,b]$ converging to $b$.
  Our density will be of the form $f=\sum_{i=0}^{\infty} g_i$
  where $g_i$ is $C^\infty$ with support in $(t_i,t_{i+1})$ and
  values in $[-c_i,c_i]$. We will inductively construct the functions
  $g_n$ and a strictly increasing sequence of exponents $k_n$ such that
  the moment sequences of the partial sums $h_n:=\sum_{i=0}^n g_i$ are eventually positive or eventually negative and, in addition, $
  0=\int_a^b h_n(x)x^{k_j} \,dx$ for all $j \in \N$ and $j\leq n$.
  
  We set $k_0:=1$ and $g_0:=0$.
  Now let $n>0$ and assume we have defined $k_0,\dots,k_n$ and
  $g_0,\dots,g_{n}$.
  By \Cref{lem:first-k-moments} we can find a $C^\infty$ function
  $\tilde{g}_{n+1}$ with support contained in $(t_{n+1},t_{n+2})$,
  whose first $k_n$ moments vanish, and whose first $n$ derivatives are
  bounded in absolute value by $\frac{1}{n}$. The latter condition will be used to
  ensure that all derivatives of $f$
  vanish in $b$.
   Moreover, by construction, Lemma~\ref{lem:first-k-moments}, and \Cref{prop:density-signed}, the moments of $\tilde{g}_{n+1}$
  are eventually positive.
  Now consider
  \begin{align*}
    a_\ell:=\frac
    {\frac{1}{t_{n+1}^\ell}\int h_n(x) x^\ell\,dx}
    {\frac{1}{t_{n+1}^\ell}\int \tilde{g}_{n+1}(x) x^\ell\,dx}.
  \end{align*}
  The numerator of this fraction converges to $0$ as $\ell \to \infty$ and (again by \Cref{prop:density-signed})
  the denominator goes to infinity.
  Hence there is $\ell_0 >k_n$ such that $|a_{\ell_0}|<1$.
  Now set $g_{n+1}:=-a_{\ell_0} \tilde{g}_{n+1}$
  and set $k_{n+1}:=\ell_0$.
  Then $\int_a^b \sum_{i=0}^{n+1} g_i(x)x^{k_j}\,dx=0$
  for $j\leq n$ by the induction hypothesis and our choice of
  $\tilde{g}_{n+1}$.
  We also have
  \begin{align*}
    \int_a^b \sum_{i=0}^{n+1} g_i(x)x^{k_{n+1}}\,dx &= \int_a^b h_n(x)
                                                  x^{\ell_0}\,dx -a_{\ell_0}
                                                  \int_a^b
                                                  \tilde{g}_{n+1}(x)x^{\ell_0}\,dx
    =0. 
  \end{align*}
Finally, by a further application of \Cref{prop:density-signed}, the moments of $h_{n+1}$ are all eventually positive or
  negative, because by construction the behaviour of $h_{n+1}$ at the right end of its support is determined by $g_{n+1}$. The claim for $f=\sum_{i=0}^\infty g_i$ now follows from an application of the theorem of dominated convergence.
  \end{proof}
  
The following result is the essential technical building block in the proof of \Cref{prop:no-embedding}. It does not yet answer the question of totality of the tail order.  

\begin{cor}
\label{cor:infinitely-many-agree}
  For every $a,b \in \R$ with $0<a<b$ there exist two distinct probability measures
  $\mu_{1}$ and $\mu_{2}$
  with support $[a,b]$ and $C^\infty$ densities $f_1$ and $f_2$ for which infinitely
  many moments agree. 
\end{cor}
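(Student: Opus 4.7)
The plan is to leverage \Cref{thm:continuous-infinitely-many-moments} and perturb a smooth positive density by the signed density it supplies. Concretely, I would first apply \Cref{thm:continuous-infinitely-many-moments} on a slightly shrunken interval $[a',b']$ with $a<a'<b'<b$ to obtain a nontrivial $C^\infty$ function $h$ with $\supp h\subset[a',b']$, $\int_a^b h(x)\,dx=0$, and $\int_a^b h(x)\,x^{k_j}\,dx=0$ for infinitely many indices $k_0<k_1<\dots$. Then I would fix any $C^\infty$ probability density $f_0$ on $[a,b]$ with $f_0(x)>0$ for $x\in(a,b)$ (for instance a smooth bump normalised to integrate to $1$), and define
\begin{equation*}
  f_1 := f_0, \qquad f_2 := f_0 + t\,h,
\end{equation*}
for a parameter $t>0$ to be chosen.

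For $f_2$ to be a probability density I need $f_2\geq 0$ and $\int f_2 = 1$. The second condition is immediate from $\int h = 0$. For the first, note that outside $[a',b']$ we have $h\equiv 0$ and $f_0>0$, while on the compact interval $[a',b']$ the continuous density $f_0$ is bounded below by some $c>0$. Hence for any $t$ with $0<t<c/\|h\|_\infty$ one obtains $f_2\geq 0$ and in fact $f_2>0$ on all of $(a,b)$, so $\supp \mu_{f_2}=[a,b]$ just as for $\mu_{f_1}$. Both $f_1$ and $f_2$ are $C^\infty$ with compact support in $[a,b]$, and they are distinct because $h$ is nontrivial.

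Finally, the moments of $\mu_{f_2}$ satisfy
\begin{equation*}
  s_k(\mu_{f_2}) = s_k(\mu_{f_1}) + t\int_a^b h(x)\,x^k\,dx,
\end{equation*}
so that $s_{k_j}(\mu_{f_1}) = s_{k_j}(\mu_{f_2})$ for every $j\in\N$, giving infinitely many matching moments.

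No step of this scheme is genuinely hard: the substantive technical work has already been done in \Cref{thm:continuous-infinitely-many-moments}. The only point requiring mild care is ensuring the exact support $[a,b]$ and non-negativity of $f_2$ simultaneously, which is why I would insist on placing $\supp h$ strictly inside $(a,b)$ and on using a bump $f_0$ strictly positive on $(a,b)$; with that separation the choice of a sufficiently small $t$ handles everything at once.
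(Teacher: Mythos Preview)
Your proposal is correct and follows essentially the same approach as the paper: perturb a fixed smooth positive density supported on $[a,b]$ by a suitably scaled copy of the signed $C^\infty$ density from \Cref{thm:continuous-infinitely-many-moments}, placed strictly inside $(a,b)$ so that nonnegativity and the exact support are preserved. The paper's version is cosmetically different only in that it uses the symmetric pair $g\pm h$ rather than $f_0$ and $f_0+th$, and absorbs the scaling into $h$ itself rather than a separate parameter $t$.
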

\begin{proof}
  Let $g$ be a non-negative $C^\infty$ function with support contained
  in $[a,b]$ and integral $1$. Let $[c,d] \subset [a,b]$ be an interval of positive length
  such that $g(x)\geq \varepsilon$ for all $x \in [c,d]$ for some
  $\varepsilon>0$. Let $h$ be the density of a signed measure with the
  properties guaranteed by \Cref{thm:continuous-infinitely-many-moments},
  support contained in $[c,d]$ and values in $[-\varepsilon,\varepsilon]$.
  Then we can set $f_1:=g-h$ and $f_2:=g+h$. Both functions
  are non-negative, have integral $1$ and the corresponding measures have
  infinitely many moments in common.  
\end{proof}

In the statement of Corollary~\ref{cor:infinitely-many-agree}, we do not have any control over the particular exponents for which the statement holds. In light of the Müntz-Szász theorem (\Cref{thm:muntz-szasz}) we
conclude with the following conjecture.
  \begin{con}
    Let $a,b \in \R$ with $0<a<b$.
    Let $(n_k)_{k \in \mathbb{N}}$ be a strictly increasing sequence of positive integers such that
    $\sum_{k=0}^\infty \frac{1}{n_k} < \infty$.
    There exists a non-trivial signed measure with support contained in $[a,b]$
    and continuous density with respect to the Lebesgue measure
    for which all $n_k$th moments vanish.
  \end{con}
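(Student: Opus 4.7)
The plan is to adapt the inductive bump construction from the proof of \Cref{thm:continuous-infinitely-many-moments} so that the vanishing moments land precisely at the exponents $n_0, n_1, n_2, \ldots$ instead of at freely chosen exponents. Fix an increasing sequence $a = t_0 < t_1 < \cdots$ with $t_i \to b$. We build $C^\infty$ bumps $g_i$ with $\supp g_i \subset (t_i, t_{i+1})$ such that, writing $h_n := \sum_{i=0}^n g_i$, one has $\int h_n(x)\,x^{n_j}\,dx = 0$ for every $j \leq n$. If the series $\sum g_i$ converges in all $C^m$ norms, the sum $f := \sum g_i$ is $C^\infty$, has support in $[a,b]$, and by dominated convergence $\int f(x)\,x^{n_k}\,dx = 0$ for all $k$. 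Nontriviality is automatic: the supports are pairwise disjoint, so $f|_{(t_0, t_1)} = g_0$, which we arrange to be nonzero at the base step using two disjoint bumps to kill the single condition $\int g_0 x^{n_0}\,dx = 0$.

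The inductive step at stage $i \geq 1$ imposes $i+1$ linear conditions on $g_i$: namely $\int g_i x^{n_j}\,dx = 0$ for $j < i$, and $\int g_i x^{n_i}\,dx = V_i := -\int h_{i-1}(x)\,x^{n_i}\,dx$. Following the pattern of \Cref{lem:first-k-moments}, we write $g_i = \sum_{\ell=1}^{i+2} c_{i,\ell}\,\tilde g_{i,\ell}$ as a combination of $i+2$ disjoint narrow $C^\infty$ bumps centered at well-separated points $x_{i,1} < \cdots < x_{i,i+2}$ in $(t_i, t_{i+1})$. The resulting $(i+1)\times(i+2)$ coefficient matrix has entries $\int \tilde g_{i,\ell}(x)\,x^{n_j}\,dx$, which for narrow bumps are close to $\mathrm{mass}(\tilde g_{i,\ell})\cdot x_{i,\ell}^{n_j}$; since the bases are distinct positive reals and the exponents $n_j$ are distinct, every $(i+1)\times(i+1)$ submatrix is a non-vanishing generalized Vandermonde determinant, so the system is solvable.

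The central obstacle is quantitative: we need $\sum_i \|g_i^{(m)}\|_\infty < \infty$ for every derivative order $m$. A crude estimate gives $|V_i| \lesssim \|h_{i-1}\|_\infty \, t_i^{n_i+1}/(n_i+1)$, while the natural scale of the $n_i$-th moment of a bump in $(t_i, t_{i+1})$ is $t_i^{n_i}$ times the bump's $L^1$-mass, so the rescaling required for the last condition alone is benign. What is not benign are the simultaneous annihilation conditions for $j < i$: they force inversion of a generalized Vandermonde whose conditioning can deteriorate severely as the gaps between successive $n_j$ grow, and the hypothesis $\sum 1/n_k < \infty$ is precisely what forces these gaps to be large. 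The hard part of the proof would be a quantitative bound on the norm of this inverse matrix, together with a choice of $(t_i)$ approaching $b$ slowly enough that the resulting amplification is absorbed. An alternative route would be to construct the Mellin transform of $f$ directly as an entire function of Paley-Wiener type vanishing at each $n_k+1$: since $\sum 1/n_k < \infty$, a Weierstrass canonical product over these zeros converges, and the task reduces to multiplying it by an envelope so that the product is the Mellin transform of a $C^\infty$ function with support in $[a,b]$. Either route is delicate, which is consistent with the conjecture remaining open.
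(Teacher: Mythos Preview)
The statement you are attempting to prove is labeled \emph{Conjecture} in the paper and is explicitly left open there; the paper provides no proof. So there is nothing in the paper to compare your proposal against.

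Your write-up is not a proof but rather an outline of two plausible attack routes (the inductive bump construction with quantitative Vandermonde estimates, and the Mellin/Paley--Wiener canonical-product approach), together with an honest identification of the obstruction in each. You are right that the inductive scheme of \Cref{thm:continuous-infinitely-many-moments} does not immediately adapt: in that theorem the exponent $k_{n+1}$ is \emph{chosen} large enough to make the correction coefficient $a_{\ell_0}$ small, whereas here the exponents $n_k$ are prescribed and the hypothesis $\sum 1/n_k < \infty$ forces them to grow fast, which is exactly the regime where the generalized Vandermonde system you would need to invert becomes ill-conditioned. Your final sentence (``which is consistent with the conjecture remaining open'') is the correct assessment: neither route, as written, closes the gap, and the paper does not claim otherwise.
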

  
We can also frame the results in this section in terms of reconstruction from the knowledge of a sparse set of moments. More precisely, one can ask how many moments are needed in order to uniquely reconstruct a measure. For discrete uniform measures with support on $n$ points in the positive reals,  \cite[Proposition 24]{melanova_recovery_2021} showed that $n$ moments suffice. Our results show that for absolutely continuous probability measures with $C^\infty$ density even the knowledge of infinitely many moments can be insufficient for a reconstruction.

\section{Pairs of Measures with continuous densities and alternating moment sequences}
\label{sec:pairs-of-measures}

Our goal in this section is to construct two
continuous functions $f,g$ on $\R$ with support contained in $[a,b]$ such
that the measures $\mu_f$ and $\mu_g$ with density $f$ and $g$ with respect to the
Lebesgue measure are not comparable in the tail order. This achieves one of the main aims of this paper, namely to show that the tail order is not total even for benign classes of probability measures.

We will construct two different examples in order to show that even severely restricting the class of densities under consideration does not make the tail order total.
In the first example the densities are $C^\infty$, in the second example
the densities are also unimodal, in the sense that they are continuous, have compact support and one unique local maximum. In particular, the individual maps do not show any oscillatory behaviour, only their difference does.

\begin{thm}
\label{thm:cinfty-notcomparable}
For every $a,b \in \R$ with $0<a<b$ there exist two absolutely continuous probability measures $\mu_f, \mu_g$ supported on $[a,b]$ with 
$C^\infty$ densities $f,g$ and a strictly increasing sequence of positive integers
$(\ell_n)_{n \in \N}$ such that 
$s_{\ell_n}(\mu_f) < s_{\ell_n}(\mu_g)$ for odd $n \in \N$ and
$s_{\ell_n}(\mu_f) > s_{\ell_n}(\mu_g)$ for even $n \in \N$.
\end{thm}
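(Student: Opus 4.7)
The strategy is to adapt the iterative construction of \Cref{thm:continuous-infinitely-many-moments} so that, instead of forcing each newly introduced moment to vanish, we force it to take a nonzero value with a prescribed alternating sign. The reduction to this perturbation problem is as in \Cref{cor:infinitely-many-agree}: once a signed $C^\infty$ function $h$ is produced with compact support in a small subinterval $[c,d] \subset (a,b)$, with $\int h = 0$, with $\|h\|_\infty$ as small as we wish, and with $s_{k_n}(h)$ alternating in sign along a strictly increasing sequence $(k_n)_{n \geq 1}$ of positive integers, we choose a $C^\infty$ probability density $\varphi$ on $[a,b]$ with $\varphi \geq \|h\|_\infty$ on $[c,d]$ and set $f := \varphi - h$, $g := \varphi + h$. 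Both are $C^\infty$ probability densities supported on $[a,b]$, and $s_{k_n}(\mu_g) - s_{k_n}(\mu_f) = 2 s_{k_n}(h)$ alternates in sign. Relabeling (or swapping $f$ and $g$) yields the sign convention required by the theorem with $\ell_n := k_{n+1}$.

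The construction of $h$ is inductive. Write $h = \sum_{i \geq 0} g_i$, where each $g_i$ is a $C^\infty$ function supported in $(t_i, t_{i+1})$ for a fixed strictly increasing sequence $(t_i) \subset [c,d]$ accumulating at $d$. Set $g_0 := 0$ and $k_0 := 0$. Assuming $g_0, \ldots, g_n$ and $k_0 < \ldots < k_n$ have been defined with $s_{k_j}(h_n) = (-1)^j$ for $1 \leq j \leq n$, apply \Cref{lem:first-k-moments} to produce a $C^\infty$ function $\tilde g_{n+1}$ with support in $(t_{n+1}, t_{n+2})$, values in $[-1,1]$, first $k_n$ moments zero, and eventually positive moments (by \Cref{prop:density-signed}); additionally require, after rescaling, $\|\tilde g_{n+1}^{(j)}\|_\infty \leq 1/n$ for $j \leq n$. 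Since $s_k(\tilde g_{n+1})/t_{n+1}^k \to \infty$ by \Cref{prop:density-signed}, while the crude estimate $|s_k(h_n)| \leq \|h_n\|_\infty \, t_{n+1}^{k+1}/(k+1)$ makes $|s_k(h_n)|/t_{n+1}^k \to 0$, the quantity
\[
c(k) := \frac{(-1)^{n+1} - s_k(h_n)}{s_k(\tilde g_{n+1})}
\]
tends to $0$ as $k \to \infty$. Choose $k_{n+1} > k_n$ large enough that $|c(k_{n+1})| < 2^{-(n+1)}$ and set $g_{n+1} := c(k_{n+1})\, \tilde g_{n+1}$. Then $s_{k_{n+1}}(h_{n+1}) = (-1)^{n+1}$, while the vanishing of the first $k_n$ moments of $g_{n+1}$ preserves $s_{k_j}(h_{n+1}) = s_{k_j}(h_n)$ for $j \leq n$.

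To conclude, one verifies that $h = \sum_{i \geq 0} g_i$ is in $C^\infty(\R)$ and that the desired moment identities persist in the limit. Smoothness on $\R \setminus \{d\}$ is immediate from the pairwise disjoint supports; smoothness at $d$ follows from the uniform bound $\|g_i^{(j)}\|_\infty \leq 2^{-i}$ for $j \leq i$, which forces $h$ and all its derivatives to extend continuously to $0$ at $d$. The condition $\int h = 0$ holds because each $g_i$ with $i \geq 1$ integrates to zero by construction. Dominated convergence, using compact support and $\|h\|_\infty \leq 2^{-1}$, gives $s_{k_n}(h) = \sum_i s_{k_n}(g_i)$; for $i > n$ the term $s_{k_n}(g_i)$ vanishes since the first $k_{i-1} \geq k_n$ moments of $g_i$ are zero, so $s_{k_n}(h) = s_{k_n}(h_n) = (-1)^n$ as required.

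The main obstacle is balancing three competing constraints in the inductive step: (i) matching the prescribed nonzero value $(-1)^{n+1}$ for $s_{k_{n+1}}(h_{n+1})$; (ii) keeping $g_{n+1}$ and its low-order derivatives small enough that the tail of the series glues into a $C^\infty$ function at the accumulation point $d$; and (iii) not disturbing the moments already set at indices $k_1, \ldots, k_n$. Constraint (iii) is built in via \Cref{lem:first-k-moments}; constraint (i) determines the scaling factor $c_{n+1}$ uniquely; and the fact that this scaling factor can simultaneously be made arbitrarily small, as required for (ii), is precisely what the growth separation $s_k(\tilde g_{n+1})/s_k(h_n) \to \infty$ from \Cref{prop:density-signed} provides. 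Thus the same mechanism that drove moments to zero in \Cref{thm:continuous-infinitely-many-moments} can be repurposed to drive them to any alternating nonzero target.
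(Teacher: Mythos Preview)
Your approach is correct in spirit and genuinely different from the paper's. The paper builds $f$ and $g$ directly as two series over disjoint bumps $h_i$ with the coefficient pattern $(1,\tfrac{2}{3},1,\tfrac{2}{3},\ldots)$ versus $(\tfrac{2}{3},1,\tfrac{2}{3},1,\ldots)$, choosing the $c_i$ and the exponents $\ell_i$ inductively via \Cref{cor:density}. You instead construct a single signed perturbation $h$ with moments of alternating sign and then set $f=\varphi-h$, $g=\varphi+h$, reusing the machinery of \Cref{thm:continuous-infinitely-many-moments} and \Cref{cor:infinitely-many-agree} almost verbatim. Your route is more modular and makes the reduction transparent; the paper's is self-contained and avoids invoking the vanishing-moment lemma.

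There is one slip. In the inductive step you claim that $c(k)=\bigl((-1)^{n+1}-s_k(h_n)\bigr)/s_k(\tilde g_{n+1})\to 0$, arguing via $s_k(\tilde g_{n+1})/t_{n+1}^k\to\infty$ and $s_k(h_n)/t_{n+1}^k\to 0$. But the constant term contributes $(-1)^{n+1}/t_{n+1}^k$, which tends to $0$ only when $t_{n+1}>1$; if $[c,d]\subset(0,1)$ then in fact $s_k(\tilde g_{n+1})\to 0$ while the numerator stays bounded away from $0$, so $|c(k)|\to\infty$ and you cannot force $|c(k_{n+1})|<2^{-(n+1)}$. The fix is cheap: either reduce to the case $a>1$ at the outset by the linear rescaling $x\mapsto\lambda x$ (which multiplies $k$th moments by $\lambda^k$, hence preserves all sign comparisons and keeps densities $C^\infty$), or drop the exact target and set $g_{n+1}:=(-1)^{n+1}2^{-(n+1)}\tilde g_{n+1}$ after choosing $k_{n+1}$ large enough that $|s_{k_{n+1}}(h_n)|<2^{-(n+2)}s_{k_{n+1}}(\tilde g_{n+1})$; this ratio does tend to $0$ by your own estimate, since the offending constant is absent. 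Either change restores the argument.
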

\begin{proof}
Let $(t_n)_{n \in \N}$ be an increasing sequence in $[a,b]$
converging to $b$ with $t_0=a$. For $n \in \N$ let $h_n$ be a $C^\infty$ function with support 
$[t_n,t_{n+1}]$ and values in $[0,1]$ such that $h_n(x)>0$ for 
all $x \in (t_n,t_{n+1})$. Set $D\coloneqq \int_\R h_0(x) \,dx$.

Our functions $f$ and $g$ will have the form
\begin{align*}
  f&\coloneqq dh_0 + \sum_{i=1}^\infty c_{2i-1} h_{2i-1} + \frac{2}{3}
  \sum_{i=1}^\infty c_{2i} h_{2i}, \\
  g&\coloneqq d'h_0 + \frac{2}{3} \sum_{i=1}^\infty c_{2i-1}h_{2i-1} + 
  \sum_{i=1}^\infty c_{2i} h_{2i} 
\end{align*}
for coefficients $d,d', c_i \in [0,\infty)$, $i\in\N_{\geq 1}$.
We will inductively choose decreasing coefficients $c_n$ and an
increasing sequence of exponents $\ell_n$ in order to ensure that
$s_{\ell_n}(\mu_f) < s_{\ell_n}(\mu_g)$ for odd $n \in \N$ and
$s_{\ell_n}(\mu_f) > s_{\ell_n}(\mu_f)$ for even $n \in \N$. Finally we will choose
$d,d'$ such that the integral over $f$ resp. $g$ equals one.
In particular $d,d'$ will lie in $[0,\frac{1}{D}]$. To get started,
set $\ell_0\coloneqq 1$ and $c_1\coloneqq \min\{\frac{1}{2(b-a)},1\}$.

Now let $n=2k$ be even and assume we already defined $c_1, \dots,
c_{2k-1}$ and $\ell_0, \dots,\ell_{2k-2}$.
Choose $\ell_{2k-1} > \ell_{2k-2}$ such that
\begin{gather}
  \begin{aligned}
    &\int_a^{t_{2k}} \left(
      0 \cdot h_0(x)
      +
      \sum_{i=1}^k c_{2i-1}h_{2i-1}(x)
      +
      \frac{2}{3} \sum_{i=1}^{k-1} c_{2i} h_{2i}(x)
    \right) x^{\ell_{2k-1}}\,dx \\
    >&\int_a^{t_{2k}} \left(
        \frac{1}{D} \cdot h_0(x)
        +
        \frac{2}{3} \sum_{i=1}^k c_{2i-1}h_{2i-1}(x)
        +
        \sum_{i=1}^{k-1} c_{2i}h_{2i}(x)
      \right)x^{\ell_{2k-1}}\,dx.
  \end{aligned}
\end{gather}
This is possible by \Cref{cor:density}, since for all $x \in (t_{2k-1},t_{2k})$ we have, by the assumption on the supports of the $h_i$, that
\begin{gather}
  \begin{aligned}
    \MoveEqLeft
      \sum_{i=1}^k c_{2i-1}h_{2i-1}(x)
      +
      \frac{2}{3} \sum_{i=1}^{k-1} c_{2i} h_{2i}(x)
    =c_{2k-1}h_{2k-1}(x) \\
    &>\frac{2}{3}c_{2k-1}h_{2k-1}(x) \\
    &=\frac{1}{D} \cdot h_0(x)
        +
        \frac{2}{3} \sum_{i=1}^k c_{2i-1}h_{2i-1}(x)
        +
        \sum_{i=1}^{k-1} c_{2i}h_{2i}(x).
  \end{aligned}
\end{gather}
In the next step, choose $c_{2k} \in (0,c_{2k-1})$ small enough such that
\begin{align}
    &\int_a^{t_{2k}} \left(
      \sum_{i=1}^k c_{2i-1}h_{2i-1}(x)
      +
      \frac{2}{3} \sum_{i=1}^{k-1} c_{2i} h_{2i}(x)
    \right) x^{\ell_{2k-1}}\,dx \label{eq:odd}\\
    &>\int_a^{t_{2k}} \left(
        \frac{h_0(x)}{D} 
        +
        \frac{2}{3} \sum_{i=1}^k c_{2i-1}h_{2i-1}(x)
        +
        \sum_{i=1}^{k-1} c_{2i}h_{2i}(x)
      \right)x^{\ell_{2k-1}}\,dx + c_{2k} \int_{a}^b x^{\ell_{2k-1}} \,dx. \nonumber
  \end{align}
In the same way if $n=2k+1$ is odd, we can choose $\ell_{2k} >
\ell_{2k-1}$ and $c_{2k+1} \in (0,c_{2k})$ such that
\begin{align}
    \MoveEqLeft\int_a^{t_{2k+1}} \left(
       \frac{1}{D} \cdot h_0(x)
        +
      \sum_{i=1}^k c_{2i-1}h_{2i-1}(x)
      +
      \frac{2}{3} \sum_{i=1}^{k} c_{2i} h_{2i}(x)
    \right) x^{\ell_{2k}}\,dx+ c_{2k+1} \int_{a}^b x^{\ell_{2k}} \,dx.\label{eq:even} \\\nonumber
    <&\int_a^{t_{2k+1}} \left(       
        \frac{2}{3} \sum_{i=1}^k c_{2i-1}h_{2i-1}(x)
        +
        \sum_{i=1}^{k} c_{2i}h_{2i}(x)
      \right)x^{\ell_{2k}}\,dx. 
\end{align}

After defining the indices $\ell_n$ and coefficients $c_n$, define $d$
and $d'$ by
\begin{align*}
  d&\coloneqq\frac{1}{D}(1-\int_{a}^{b} \sum_{i=1}^\infty c_{2i-1}h_{2i-1}(x) + \frac{2}{3}
  \sum_{i=1}^\infty c_{2i}h_{2i}(x)\,dx), \\
  d'&\coloneqq\frac{1}{D}(1-\int_{a}^{b} \frac{2}{3}\sum_{i=1}^\infty
      c_{2i-1}h_{2i-1}(x) + 
  \sum_{i=1}^\infty c_{2i}h_{2i}(x)\,dx).
\end{align*}

By letting our coefficients $c_n$ go to zero fast enough, we can also
ensure that $f$ and $g$ are $C^\infty$, the only potentially problematic
place being $b$.

The claim that $s_{\ell_n}(\mu_f) < s_{\ell_n}(\mu_g)$ for odd $n \in \N$ and
$s_{\ell_n}(\mu_f) > s_{\ell_n}(\mu_f)$ for even $n \in \N$ now
follows directly from \eqref{eq:odd} and \eqref{eq:even}
together with the fact that the coefficients $c_n$ are decreasing.
\end{proof}

\begin{rem}\label{rem:dependece-on-utrafiler}
By a slight modification of the construction in the proof of \Cref{thm:cinfty-notcomparable} we can ensure that 
\begin{align*}
    s_{\ell_{2k+1}+i}(\mu_f) &< s_{\ell_{2k+1}+i}(\mu_g), \\
    s_{\ell_{2k}+i}(\mu_f) &> s_{\ell_{2k}+i}(\mu_g)
\end{align*}
for $k \in \N$ and $i \in \{0,\dots,\ell_{2k+1}\}$ or $i \in \{0,\dots,\ell_{2k}\}$, respectively.
Since $\sum_{i=0}^{n} \frac{1}{n+i} \geq \frac{1}{2}$, the increasing enumeration of both sets
\begin{align*}
    M_1&:=\{n \in \N \setsep s_n(\mu_f) < s_n(\mu_g)\},\\
    M_2&:=\{n \in \N \setsep s_n(\mu_g) > s_n(\mu_g)\}
\end{align*}
produces Müntz-Szász sequences. 
Let $K \in \FMS$ be a set in the Müntz-Szász filter. 
Then $M_i \not\subset \N \setminus K$ and hence $M_i \cap K \neq \emptyset$ for $i=1,2$.
Therefore $\FMS \cup \{M_1\}$ and $\FMS \cup  \{M_2\}$ can both be extended to 
nonprincipal ultrafilters $\F_1^*$ and $\F_2^*$, respectively.
Then $\mu_f$ is smaller than $\mu_g$ with respect to the order defined by
$\F_1^*$ but with respect to $\F_2^*$ the order is reversed. 
\end{rem} 

\begin{rem}
The result of Theorem~\ref{thm:cinfty-notcomparable} has an interesting but negative consequence when interpreted in the context of games, see \Cref{sec:games}. Consider in the notation of the theorem the densities
\begin{align*}
  \tilde{f}_1(x)&\coloneqq \gamma_1 h_0(x)
       +
       \frac{1}{3}\sum_{i=1}^kc_{2i-1}h_{2i-1}(x)
       +
       \frac{1}{3}\sum_{i=1}^\infty c_{2i}h_{2i}(x),\\
  \tilde{f}_2(x)&\coloneqq \gamma_2 h_0(x)
       +
       \frac{5}{3}\sum_{i=1}^kc_{2i-1}h_{2i-1}(x)
       +
       \sum_{i=1}^\infty c_{2i}h_{2i}(x).
\end{align*}
where $\gamma_1$ and $\gamma_2$ are the unique positive coefficients that turn
$\tilde{f}_1$ and $\tilde{f}_2$ into probability densities.
Then $\tilde{f}_1, \tilde{f}_2$ and $g$ are all comparable in the tail order and we
get $\tilde{f}_1 \preceq g \preceq \tilde{f}_2$.
But $g$ and $f=\frac{1}{2}\tilde{f}_1+\frac{1}{2}\tilde{f}_2$ are not comparable.
So even if  in a game all payoffs corresponding to pure strategies  are comparable, the payoffs
for mixed strategies can still be non-comparable.
\end{rem}

Now one could hope to salvage the totality of the tail order
by restricting to a space of densities which are not oscillating
"too much". The following example, however, removes this hope
as we will construct an incomparable pair of continuous densities
which each have only one local maximum, hence showing no oscillatory
behaviour on their own at all.

\begin{thm}\label{exam:2}
 For every $a,b \in \R$ with $0<a<b$ there exist a pair of absolutely continuous probability measures $\mu_f, \mu_g$ supported on $[a,b]$ with 
 unimodal $C^\infty$ densities $f,g$ and a strictly increasing sequence of positive integers
 $(\ell_n)_{n \in \N}$ such that 
 $s_{\ell_n}(\mu_f) < s_{\ell_n}(\mu_g)$ for odd $n \in \N$ and
 $s_{\ell_n}(\mu_f) > s_{\ell_n}(\mu_g)$ for even $n \in \N$.
 \end{thm}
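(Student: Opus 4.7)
The plan is to mimic the construction in the proof of \Cref{thm:cinfty-notcomparable}, replacing the sum of disjointly supported bumps $dh_0$ (resp.\ $d'h_0$) that served as background by a single fixed $C^\infty$ unimodal density $u$ whose right tail has a strictly negative slope bounded away from zero. The oscillatory perturbations responsible for the moment alternation will then be shrunk in amplitude so that their derivatives cannot overcome this slope, preserving unimodality of the total density. Concretely, I fix a $C^\infty$ unimodal probability density $u$ on $[a,b]$ with unique mode at some $m \in (a,b)$, and pick $m',b'$ with $m < m' < b' < b$ and $\rho > 0$ such that $u'(x) \leq -\rho$ for all $x \in [m',b']$. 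I then choose a strictly increasing sequence $(t_n)_{n \geq 1}$ in $(m',b')$ with $t_n \to b'$, and for each $n \geq 1$ a $C^\infty$ bump function $h_n$ with support $[t_n,t_{n+1}]$, values in $[0,1]$, and $h_n > 0$ on the interior.

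Following the structure of \Cref{thm:cinfty-notcomparable}, I would set
\begin{align*}
  f &\coloneqq \alpha_f u + \sum_{i \geq 1} c_{2i-1} h_{2i-1} + \tfrac{2}{3} \sum_{i \geq 1} c_{2i} h_{2i}, \\
  g &\coloneqq \alpha_g u + \tfrac{2}{3} \sum_{i \geq 1} c_{2i-1} h_{2i-1} + \sum_{i \geq 1} c_{2i} h_{2i},
\end{align*}
with positive coefficients $c_n$ and exponents $\ell_n$ chosen by the same inductive procedure as in \Cref{thm:cinfty-notcomparable}, that is, the analogues of \eqref{eq:odd} and \eqref{eq:even} with $dh_0$, $d'h_0$ replaced by $\alpha_f u$, $\alpha_g u$, and $\alpha_f, \alpha_g > 0$ determined at the end so that $\int f = \int g = 1$. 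At the $n$-th inductive step, besides the smallness of $c_n$ required to satisfy the moment inequality, I would impose the additional constraints $c_n \|h_n'\|_\infty < \rho/4$, $c_n \int h_n < 2^{-n-1}$, and analogous smallness bounds on $\|h_n^{(k)}\|_\infty$ for $k \geq 2$ so that $f,g$ are $C^\infty$. The moment-alternation inequalities along the resulting exponents $(\ell_n)$ then follow verbatim from the computation in \Cref{thm:cinfty-notcomparable}, since that argument depends only on the rightmost bump dominating the truncated moment integral for large exponents and is unaffected by the choice of background term.

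The substantive new ingredient is the unimodality check. The integral bound yields $\alpha_f, \alpha_g \geq 1/2$, and on each interval $[t_n,t_{n+1}] \subset [m',b']$ the derivative bound gives $f'(x) \leq \alpha_f u'(x) + c_n \|h_n'\|_\infty \leq -\rho/2 + \rho/4 < 0$, and analogously $g'(x) < 0$; on $[a,m']$ and $[b',b]$ the perturbations vanish and $f = \alpha_f u$, $g = \alpha_g u$ inherit the monotonicity and unique mode at $m$ from $u$, whence $f$ and $g$ are unimodal $C^\infty$ probability densities. The main obstacle is reconciling the new unimodality and normalisation constraints with the moment-alternation constraint. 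This is resolved by the observation that the induction in \Cref{thm:cinfty-notcomparable} only requires $c_n$ to lie below a strictly positive upper bound determined by previously chosen data, so any further positive upper bound—such as those imposed here for unimodality, normalisation, and $C^\infty$-regularity—can be accommodated simultaneously at each step.
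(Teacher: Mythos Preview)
Your construction has a fatal gap: the background density $u$ extends to the right of all the perturbation bumps. You place the $h_n$ on intervals $[t_n,t_{n+1}]\subset(m',b')$ with $b'<b$, but $u$ is positive on $(b',b)$. Hence for $x\in(b',b)$ one has $f(x)-g(x)=(\alpha_f-\alpha_g)u(x)$, and the normalisation gives
\[
\alpha_f-\alpha_g=\tfrac{1}{3}\Bigl(\sum_{i}c_{2i}\int h_{2i}-\sum_{i}c_{2i-1}\int h_{2i-1}\Bigr),
\]
which has no reason to vanish under the constraints you impose. Thus $f-g$ has a definite sign on the entire right end of the common support, and by \Cref{prop:density-signed} the moment differences $s_n(\mu_f)-s_n(\mu_g)$ eventually have a fixed sign---the alternation fails. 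Your claim that the moment-alternation argument from \Cref{thm:cinfty-notcomparable} is ``unaffected by the choice of background term'' is exactly where this breaks: in that proof $h_0$ is supported on the \emph{leftmost} interval $[t_0,t_1]$ and vanishes everywhere to the right, so the unknown coefficients $d,d'$ never interfere with the right-end comparison that drives \Cref{cor:density}. Choosing $b'=b$ does not help either, since then $u'(x)\to 0$ as $x\to b$ and the uniform slope bound $u'\le -\rho$ is unavailable.

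The paper avoids this by applying \Cref{thm:cinfty-notcomparable} as a black box on a subinterval $(c,d)$ where a fixed unimodal $h$ satisfies $h'<-K$, obtaining probability densities $\tilde f,\tilde g$ with first derivatives bounded by some $L$, and then setting $f=\alpha\tilde f+(1-\alpha)h$ and $g=\alpha\tilde g+(1-\alpha)h$ with the \emph{same} $\alpha=K/(L+K)$. The background then cancels in $f-g=\alpha(\tilde f-\tilde g)$, so the moment alternation is inherited directly, while $\alpha L+(1-\alpha)(-K)=0$ forces $f',g'<0$ on $(c,d)$ and hence unimodality. Your route could in principle be salvaged by adding the constraint $\sum_i c_{2i-1}\int h_{2i-1}=\sum_i c_{2i}\int h_{2i}$ so that $\alpha_f=\alpha_g$, but this would have to be woven into the induction and at that point the argument essentially collapses to the paper's.
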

\begin{proof}
    Let $h$ be a nonnegative $C^\infty$ unimodal function with support in the interval $(a,b)$ and integral $1$.
    Then there is a subinterval $(c,d)$ of $(a,b)$
    and a constant $K>0$
        such that $h'(x)<-K$ for all $x \in (c,d)$.
    Applying \Cref{thm:cinfty-notcomparable}, there exist $C^\infty$ functions $\tilde{f}$ and $\tilde{g}$ with support in the interval $(c,d)$ satisfying the assertion of 
    \Cref{thm:cinfty-notcomparable}.
    Since $\tilde{f}$ and $\tilde{g}$ have continuous first derivatives, we find $L>0$ such that
    $\tilde{f}'(x)<L$ and $\tilde{g}'(x)<L$ for all $x \in [c,d]$.
    Set $\alpha:=\frac{K}{L+K}$, $f:=\alpha \tilde{f}+(1-\alpha)h$
    and $g:=\alpha \tilde{g}+(1-\alpha)h$.
    Then $\alpha L + (1-\alpha)(-K) =0$, hence
    $f'(x)<0$ and  $g'(x)<0$ for $x \in (c,d)$.
    With this construction the signs of the derivatives of 
    $f$, $g$ and $h$ are identical everywhere on $\R_{\geq 0}$,
    since $\tilde f$ and $\tilde g$ have support in $(c,d)$.
    The unimodality of $h$
    is therefore inherited by $f$ and $g$.
    Furthermore $f$ and $g$ are convex combinations
    of $C^\infty$ probability densities and therefore
    $C^\infty$ probability densities themselves. 
    Finally 
    \begin{align*}
        s_{\ell_n}(\mu_f) - s_{\ell_n}(\mu_g) &= \int_{a}^b t^n (f(t)-g(t)) \,dt \\
        &= \int_{a}^b t^n \alpha (\tilde{f}(t)-\tilde{g}(t)) \,dt = \alpha (s_{\ell_n}(\mu_{\tilde{f}}) -s_{\ell_n}(\mu_{\tilde{g}}) ). 
    \end{align*}
    This shows that $f,g$ inherit the alternating behavior of the moment sequences of $\tilde{f}$ and $\tilde{g}$ obtained in \Cref{thm:cinfty-notcomparable}. The proof is complete.
\end{proof}

As a consequence we see that the tail order is not total on the following rather benign class of probability measures.

\begin{cor}
\label{cor:not-total-cinfty-unimodal}
    The tail order $\preceq$ is not total when restricted to
    probability measures with compact support in $\R_{\geq 0}$ and
    unimodal $C^\infty$ densities.
\end{cor}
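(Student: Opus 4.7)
The plan is to derive this corollary directly from \Cref{exam:2}, which already does all the heavy lifting. First I would fix any $0<a<b$ and invoke \Cref{exam:2} to produce probability measures $\mu_f,\mu_g$ with unimodal $C^\infty$ densities supported on $[a,b]$, together with a strictly increasing sequence of positive integers $(\ell_n)_{n\in\N}$ witnessing the alternating behavior $s_{\ell_n}(\mu_f)<s_{\ell_n}(\mu_g)$ for $n$ odd and $s_{\ell_n}(\mu_f)>s_{\ell_n}(\mu_g)$ for $n$ even.

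Next I would argue that these two measures are incomparable in the tail order. Indeed, suppose for contradiction that $\mu_f\preceq \mu_g$. By the definition of $\preceq$, there would exist $n_0\in\N$ with $s_n(\mu_f)\leq s_n(\mu_g)$ for all $n\geq n_0$. But since $(\ell_n)$ is strictly increasing, we can pick some even $n\in\N$ with $\ell_n\geq n_0$; for this index we have $s_{\ell_n}(\mu_f)>s_{\ell_n}(\mu_g)$, a contradiction. Symmetrically, $\mu_g\preceq \mu_f$ fails by choosing an odd index $n$ with $\ell_n\geq n_0$.

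Hence neither $\mu_f\preceq\mu_g$ nor $\mu_g\preceq \mu_f$ holds, so $\preceq$ is not total on the class of probability measures with compact support in $\R_{\geq 0}$ and unimodal $C^\infty$ densities, proving the corollary. There is no real obstacle here since all the work was done in \Cref{exam:2}; the only point to verify is the trivial observation that an infinitely often alternating pair of moment sequences cannot be comparable under eventual dominance.
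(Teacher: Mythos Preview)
Your proposal is correct and follows exactly the paper's approach: the paper states this corollary immediately after \Cref{exam:2} with the sentence ``As a consequence we see that the tail order is not total on the following rather benign class of probability measures'' and gives no separate proof, since the alternation of moments produced in \Cref{exam:2} is precisely what rules out eventual dominance in either direction. Your explicit verification that alternating moment sequences are incomparable under $\preceq$ just spells out this immediate step.
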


 \section{Games with measure payoffs}
 \label{sec:games}
 
 The motivation for introducing the order $\preceq$ in \cite{rass2020gametheoretic_p1_v5} was to apply it as a preference ordering in non-cooperative games with probability distribution payoffs as discussed in \cite{rass2020gametheoretic_p2_v2,rassDefendingAdvancedPersistent2017}.
 For this purpose, the theory of non-cooperative games, usually formulated with real payoffs, can be generalized as follows.

     A (finite) \emph{distribution-valued  game} is a non-cooperative $n$-player game where Player $k$ has a finite set of strategies $V_k,k=1,\ldots,n$, and $V:=\prod_{k=1}^n V_k$ is the set of strategy profiles, i.e. the set of the possible ways of playing the game. Each player has a  payoff or utility function $u_k: V\to\Dco$.
     The strategies in the set $V_k$ are called pure strategies.
     A mixed strategy of Player $k$ is a formal convex combination of the pure strategies in $V_k$. The space of mixed strategies of Player $k$ is denoted $\Sigma(U_k)$. The $u_k$ are extended to the $\prod_{k=1}^n \Sigma(V_k)$ by affine extension as usual. 
     
For a strategy profile $v=(v_1, \dots, v_n)$ and a Player $k$ we will write $v=(v_k,v_{-k})$, where $v_{-k}$  contains the strategies played by the other players. In the following we restrict ourselves to the case of two-player games and we assume that both players receive identical payoffs. The players are antagonistic in the sense that while Player 1 wishes to maximize in the tail order, Player 2 aims for small payoffs. This set-up mimics the classic notion of zero-sum games. 
    A \emph{Nash equilibrium with respect to $\preceq$} is a (mixed) strategy profile $v=(v_1, v_2)$ such that 
    \begin{align*}
        \forall \tilde{v}_1 \in \Sigma(V_1)&: u_1(\tilde{v}_1, v_{-1}) \preceq u_1(v_1, v_{-1}),\\
        \forall \tilde{v}_2 \in \Sigma(V_2)&: u_2(\tilde{v}_2, v_{-2}) \succeq u_2(v_2, v_{-2}).
    \end{align*}

 In the theory of non-cooperative real-valued games, Nash's theorem states that every finite game has at least one Nash equilibrium in mixed strategies \cite{nashEquilibriumPoints1950}, \cite[Theorem 1.1]{fudenbergTiroleGameTheory1991}.
 It turns out that distribution-valued games with tail order preferences lack this important property: There are distribution-valued games that have no Nash equilibria when preferences are expressed by the tail order.
 This even holds true if we only consider payoff distributions with finite support, in which case $\preceq$ is a total order on the payoffs.

In the following, we represent a probability distribution $P$ supported on the finite set $[N] \coloneqq \{1, 2, \dots, N\}$, $N \in \N_{\geq 1}$, by the vector of probability masses $(P(1), \dots, P(N))$.
 In this representation, the tail order is equivalent to a lexicographic order which compares from right to left, see \cite[Theorem~3]{rassDecisionsUncertainConsequences2016a}.
 If $G$ is a distribution-valued game with payoffs supported on $[N]$, we define its \emph{$i$th coordinate projected game} $G_i$, $i \in [N]$, as the real-valued game where the payoff of Player $k$ under strategy profile $s$ is the $i$th coordinate of the corresponding payoff in $G$.
 
 \begin{thm}[{\cite[Example 4.40]{buergin2020bachelorThesis}}]
 \label{thm:noNash}
 There exists a distribution-valued game $G$ such that
 \begin{enumerate}[(i)]
     \item $G$ is a two-player 
     game with tail-order preferences,
     \item the payoffs of $G$ are distributions with finite support,
     \item $G$ has no Nash equilibrium.
 \end{enumerate}
 \label{thm:distribution-valued-game}
 \end{thm}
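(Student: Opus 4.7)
The plan is to exhibit an explicit $2\times 2$ zero-sum distribution-valued game with payoffs supported on the three-point set $\{1,2,3\}$ and to verify non-existence of a Nash equilibrium by a direct best-response analysis. Three is the smallest support size for which this can work: on two-point supports the mass vector is parametrized by a single number, so the tail order reduces to a real-valued payoff and Nash's theorem applies.

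First, I would recall the equivalence (cited via \cite[Theorem~3]{rassDecisionsUncertainConsequences2016a} in the preceding discussion): on finitely supported distributions on $\{1,\dots,N\}$ the tail order coincides with the lexicographic order on the mass vectors $(p_1,\dots,p_N)$ read from right to left. This follows from the observation that, for large $k$, the $k$-th moment $\sum_i i^k p_i$ is dominated by the term of largest index $i$ with $p_i>0$, with ties broken by the next largest. Under mixed strategies the expected payoff distribution has mass vector affine in the strategy probabilities, so after projecting to the two coordinates $(p_3,p_2)$ relevant to the lex comparison (with $p_3$ dominant), both are bilinear functions of the two mixing probabilities.

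Next, I would build a matching-pennies skeleton in the dominant coordinate $p_3$ and use the subordinate coordinate $p_2$ as a tiebreaker that destroys the mixed equilibrium of the skeleton. Concretely, take pure-strategy payoff distributions whose $(p_2,p_3)$-pairs on the $2\times 2$ grid are $(2/3,\,0)$, $(0,\,1/3)$, $(0,\,1/3)$, $(1/3,\,0)$ for $(R_1,C_1),(R_1,C_2),(R_2,C_1),(R_2,C_2)$ respectively. Writing $p,q$ for the probabilities on $R_1,C_1$, a short calculation gives expected $p_3$-coordinate $V(p,q)=(p+q-2pq)/3$ and expected $p_2$-coordinate $A(p,q)=(3pq-p-q+1)/3$.

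Finally, I would compute the best-response correspondences. For $q\neq 1/2$ the $p_3$-coordinate is strictly monotone in $p$ and selects a unique pure response in $\{0,1\}$; at $q=1/2$ it is constant in $p$, while $A(p,1/2)=(p+1)/6$ is strictly increasing, so the lex tiebreak selects $p=1$. By the symmetry of the construction the same holds for Player~$2$ (minimizing). The two best-response maps are therefore single-valued with values in $\{0,1\}$, and direct inspection of the four pure profiles yields the four-cycle $(1,0)\to(1,1)\to(0,1)\to(0,0)\to(1,0)$ under simultaneous iterated best response, with no fixed point; hence no Nash equilibrium exists. The main subtlety — and the place where care is needed — lies in choosing the $p_2$-coordinates so that the tiebreak at the would-be mixed equilibrium $(1/2,1/2)$ of the $p_3$-skeleton strictly pushes each player away from that point; this is precisely the upper-semicontinuity failure of the best-response correspondence that Kakutani's theorem cannot absorb, and it is what rescues the argument from the lex order behaving "too much" like a real-valued payoff.
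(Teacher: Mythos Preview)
Your proof is correct and rests on the same mechanism as the paper's: a $2\times 2$ zero-sum game with a matching-pennies structure in the dominant $p_3$-coordinate, whose unique mixed equilibrium is destroyed by a strict tiebreak in the subordinate $p_2$-coordinate. The execution, however, is organized differently. You compute the full best-response correspondence, show it is single-valued with values in $\{0,1\}$, and then check the four pure profiles directly. The paper instead isolates a reduction principle: any Nash equilibrium of $G$ must already be a Nash equilibrium of the top projected real-valued game $G_3$ (since any strict improvement in the third coordinate is a strict improvement in the lex order). Uniqueness of the Nash equilibrium of $G_3$ then leaves a single candidate $((\tfrac12,\tfrac12),(\tfrac12,\tfrac12))$, and one checks that Player~1 can deviate keeping the third-coordinate payoff constant but strictly improving the second. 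Your computation is more elementary and self-contained; the paper's reduction is cleaner and, as noted in the remark following the proof, generalizes to a hierarchy of projected games yielding a broad sufficient condition for non-existence. The specific numerical payoffs differ between the two constructions, but that is inessential.
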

 \begin{proof}
 We will show that the distribution-valued two-player zero-sum game shown in \autoref{gameWithNoNashEquilibrium} has no Nash equilibrium with respect to the tail order. Player~1 plays the strategies $a_1, a_2$, Player~2 plays the strategies $b_1, b_2$. By definition, Player~1 prefers payoffs that are large in the tail order, while Player~2 prefers payoffs that are small in the tail order. The projected real-valued games are shown in \autoref{gameWithNoNashEquilibrium-t2}.
 \begin{table}[h]
     \centering
     \begin{tabular}{c|c|c}
                & $b_1$ & $b_2$ \\\hline
          $a_1$ & (0.3, 0.2, 0.5) & (0.6, 0.3, 0.1) \\\hline
          $a_2$ & (0.8, 0.1, 0.1) & (0.3, 0.2, 0.5)
     \end{tabular}
     \caption{Payoffs of the distribution-valued game $G$. The common support of the distributions is $\{1,2,3\}$. Depicted are the values of the probability mass functions.}
     \label{gameWithNoNashEquilibrium}
 \end{table}
 \begin{table}[h]
     \centering
     \begin{tabular}{c|c|c}
                & $b_1$ & $b_2$ \\\hline
          $a_1$ & 0.3 & 0.6 \\\hline
          $a_2$ & 0.8 & 0.3
     \end{tabular} \quad\quad
     \begin{tabular}{c|c|c}
                & $b_1$ & $b_2$ \\\hline
          $a_1$ & 0.2 & 0.3 \\\hline
          $a_2$ & 0.1 & 0.2
     \end{tabular} \quad\quad
     \begin{tabular}{c|c|c}
                & $b_1$ & $b_2$ \\\hline
          $a_1$ &  0.5 & 0.1 \\\hline
          $a_2$ & 0.1 & 0.5
     \end{tabular}
     \caption{Payoffs for the real-valued subgames $G_1$, $G_2$, $G_3$}
     \label{gameWithNoNashEquilibrium-t2}
 \end{table}
 The unique Nash equilibrium of $G_3$ is the mixed-strategy equilibrium $((0.5, 0.5), (0.5, 0.5))$.
 The projected game $G_2$ has the unique pure Nash equilibrium $((1, 0), (1, 0))$.
We will show that, together, the Nash equilibria of $G_2,G_3$
 imply that $G$ 
 does not have a Nash equilibrium.
 
 The key observation here is the following: If $v$ is a Nash equilibrium of $G$, then it is also a Nash equilibrium of $G_3$. If this were not the case, one player, without loss of generality Player $1$, could improve his payoff $u_{1,3}$ in $G_3$ by deviating from $v_k$ to some $\tilde{v}_k$. As we consider the lexicographic order from the right, 
 we have that $u_{1,3}(\tilde{v}_1,v_{-1}) > u_{1,3}(v_1,v_{-1})$ implies for the game $G$ that $u_{1}(\tilde{v}_1,v_{-1}) \succ u_{1}(v_1,v_{-1})$, a contradiction.
 
 
 As $G_3$ has the unique Nash equilibrium $v = ((0.5, 0.5), (0.5, 0.5))$, the only potential Nash equilibrium for $G$ is $v$.
 Recall that for mixed Nash equilibria in real-valued games, the following holds:
 If Player $k$ deviates to a different mixed strategy $\tilde{v}_k$ which is a convex combination of the same pure strategies used to represent $v_k$, the payoff $u_k(\tilde{v}_k, v_{-k}) = u_k(v_k, v_{-k})$ does not change \cite[cf. Theorem 2.1]{papadimitrouAlgorithmicGameTheoryChTwo2007}.
 Therefore, any deviation from $v$ by a single player keeps this player's third-coordinate payoff constant.
 But while no such deviation can improve a player's payoff in the third coordinate, it can improve it in the second coordinate:
 In particular, $u_1((1, 0), (0.5, 0.5)) = (0.35, 0.25, 0.3)$ is greater than $u_1((0.5, 0.5), (0.5, 0.5)) = (0.5, 0.2, 0.3)$ if compared lexicographically from right to left.
 So $v = ((0.5, 0.5), (0.5, 0.5))$ is not a Nash equilibrium of $G$ and so $G$ has no Nash equilibrium.
 %
 \end{proof}
 It should be noted that the previous example did not have to be constructed particularly carefully: If payoffs are chosen arbitrarily, there is a good chance to define a game without a Nash equilibrium. To justify this, we define the following hierarchy of games for a distribution-valued game $G$ with payoffs supported on $[N]$: Let $G_N$, the projected game corresponding to $N$, have a Nash equilibrium $v_N$ (which exists by Nash's theorem). Then consider $G_{N-1,v_N}$ as the projected game corresponding to $N-1$, where in addition the strategies of all players are restricted to those strategies that occur with positive weight in $v_N$ (effectively some rows and columns in the respective tables are removed). Consider a Nash equilibrium $v_{N-1}$ of  $G_{N-1,v_N}$ and construct $G_{N-2,v_N,v_{N-1}}$ in the same fashion, etc.
 For the game $G$ to have no Nash equilibrium, it is sufficient that for each Nash equilibrium $v_N$ of $G_N$, the second-highest coordinate game $G_{N-1,v_N}$, does not have $v_N$ as a Nash equilibrium, \cite[Theorem 4.47]{buergin2020bachelorThesis}.

\section{Conclusion}
\label{sec:conclusion}

We have analyzed a recently introduced partial order on the set of compactly supported finite Borel measures on the positive reals. Some cases have been identified where this order is total, but in quite benign cases the order fails to be total. Some consequences for the theory of distribution-valued games have been discussed. In general there are significant obstacles to the existence of Nash equilibria in this context. Thus in the analysis of such games, this question needs to be carefully considered. In many cases, it appears to be adequate to follow the classic route of studying Pareto or other concepts of optimality.
  
\section*{Acknowledgements:} The authors would like to thank Ali Alshawish, Hermann de Meer, Sandra König and Stefan Rass for useful discussions in the course of the preparation of this paper. In addition, we thank an anonymous reviewer for suggesting a simplified proof for the previous version of Theorem~\ref{exam:2}, which led to the current stronger statement.

 \bibliographystyle{elsarticle-num} 
 \bibliography{bibliography}
\end{document}